\definecolor{ovgu_red}{rgb}{0.82,0.25,0.35}
\tikzset{%
	every picture/.style = {scale=0.5},
	newton polytope/.style = {%
		line width = 1pt,
		every circle/.style = {radius=0.1},
		lattice point/.style = {color=gray!50},
	},
}
\def\MR#1{}
\theoremstyle{plain}
\newtheorem{thm}{Theorem}[section]
\newtheorem{prop}[thm]{Proposition}
\newtheorem{lem}[thm]{Lemma}
\theoremstyle{definition}
\newtheorem{defn}[thm]{Definition}
\newtheorem{ex}[thm]{Example}
\newtheorem{rem}[thm]{Remark}
\newcommand{\supp}{{\mathrm{supp}}}
\newcommand{\sqp}[1][]{%
\ifthenelse{\isempty{#1}}{\mathcal{P}(\square_2)}{\mathcal{P}^{#1}(\square_2)}%
}
\DeclareMathOperator{\conv}{conv}
\DeclareMathOperator{\MV}{MV}
\DeclareMathOperator{\NP}{N}
\DeclareMathOperator{\Ht}{ht}
\newcommand{\R}{{\mathbb{R}}}
\newcommand{\Z}{{\mathbb{Z}}}
\newcommand{\C}{{\mathbb{C}}}
\newcommand{\Sm}{{\mathcal{S}}}
\newcommand{\Rm}{{\mathcal{R}}}
\newcommand{\Qm}{{\mathcal{Q}}}
\newcommand{\rleft}{\mathopen{}\mathclose\bgroup\left}
\newcommand{\rright}{\aftergroup\egroup\right}
\newcommand{\set}[1]{\rleft\{ {#1} \rright\}}
\DeclareMathOperator{\Vol}{Vol}
\def\coloneqq{\mathrel{\mathop:}=}
\title{Liftings of polynomial systems decreasing the mixed volume}
\author[C.\,Borger]{Christopher Borger}
\address[C.\,Borger]{Fakult\"at f\"ur Mathematik\\Institut f\"ur Algebra und Geometrie\\Otto-von-Guericke-Universit\"at Magdeburg\\Universit\"atsplatz 2\\ 39106 Magdeburg\\Germany}
\curraddr{}
\email{christopher.borger@ovgu.de}
\thanks{}
\author[T.\,Kahle]{Thomas Kahle}
\address[T.\,Kahle]{Fakult\"at f\"ur Mathematik\\Institut f\"ur Algebra und Geometrie\\Otto-von-Guericke-Universit\"at Magdeburg\\Universit\"atsplatz 2\\ 39106 Magdeburg\\Germany}
\curraddr{}
\email{thomas.kahle@ovgu.de}
\thanks{}
\author[A.\,Kretschmer]{Andreas Kretschmer}
\address[A.\,Kretschmer]{Fakult\"at f\"ur Mathematik\\Institut f\"ur Algebra und Geometrie\\Otto-von-Guericke-Universit\"at Magdeburg\\Universit\"atsplatz 2\\ 39106 Magdeburg\\Germany}
\curraddr{}
\email{andreas.kretschmer@ovgu.de}
\thanks{}
\author[S.\,Sager]{Sebastian Sager}
\address[S.\,Sager]{Fakult\"at f\"ur Mathematik\\Institut f\"ur Mathematische Optimierung\\Otto-von-Guericke-Universit\"at Magdeburg\\Universit\"atsplatz 2\\ 39106 Magdeburg\\Germany}
\curraddr{}
\email{sager@ovgu.de}
\thanks{}
\author[J.\,Schulze]{Jonas Schulze}
\address[J.\,Schulze]{Fakult\"at f\"ur Mathematik\\Institut f\"ur Mathematische Optimierung\\Otto-von-Guericke-Universit\"at Magdeburg\\Universit\"atsplatz 2\\ 39106 Magdeburg\\Germany}
\curraddr{}
\email{jonas.schulze@ovgu.de}
\thanks{This work is funded by the Deutsche Forchungsgemeinschaft (314838170,
  GRK 2297, MathCoRe).}
\subjclass[2020]{Primary: 52A39, 65H20; Secondary: 65H04, 14M25, 13P15, 14Q65, 52B20}
\keywords{lattice polytopes, sparse polyhedral homotopy, mixed volume}
\begin{document}

\begin{abstract}
  The BKK theorem states that the mixed volume of the Newton polytopes
  of a system of polynomial equations upper bounds the number of
  isolated torus solutions of the system.  Homotopy continuation
  solvers make use of this fact to pick efficient start systems.  For
  systems where the mixed volume bound is not attained, such methods
  are still tracking more paths than necessary.  We propose a strategy
  of improvement by lifting a system to an equivalent system with a
  strictly lower mixed volume at the expense of more variables.  We
  illustrate this idea providing lifting constructions for arbitrary
  bivariate systems and certain dense-enough systems.
\end{abstract}

\maketitle{}

\section{Introduction}

The Bernstein–Khovanskii–Kouchnirenko (BKK) theorem states that the
number of isolated solutions of a system of $d$ Laurent polynomials in
$d$ variables in the algebraic torus $(\C^*)^d$ is bounded above by
the mixed volume of the Newton polytopes of the
polynomials~\cite{B75}, \cite[Section~7.5]{CLO05}.  This bound only
depends on the monomials occurring in the different polynomials and
may or may not be attained by a concrete choice of coefficients.
Whether the mixed volume bound is attained depends on the solvability
of lower-dimensional systems determined by the convex geometry of the
Newton polytopes~\cite{B75}.

Homotopy continuation is an important technique to solve polynomial
systems by tracking the solutions of a known start system to the
solutions of a system of interest along the paths of a homotopy.  The
computational complexity of this method depends on the number of paths
to be tracked and the cost of tracking each path.  It can be difficult
to judge how the two costs are related.  Ideally one wants to track
only as many paths as the system of interest has solutions.  In
practical applications that number is almost always much lower than
the number of solutions of the start system.  In total, a homotopy
continuation method needs a good upper bound on the number of
solutions and a suitable start system.  In \cite{huberSturmfels},
Huber and Sturmfels show how to construct efficient start systems
using the geometry of the Newton polytopes, introducing mixed
subdivisions for sparse homotopy continuation.  Solvers based on this
approach are the current state of the art and are implemented in
\texttt{PHCpack}~\cite{phcpack} and as
\texttt{HomotopyContinuation.jl} in \texttt{Julia}~\cite{hcJulia}.
The number of paths to be tracked in a sparse homotopy is the mixed
volume of the Newton polytopes of the target system.  In practice,
however, the mixed volume bound is often not attained and therefore
sparse homotopy continuation still tracks superfluous paths that do
not lead to actual torus solutions of the system.

In this paper we propose to study \emph{liftings} of polynomial
systems to equivalent systems with more equations and more variables
but fewer paths to be tracked.  After laying out the idea, we present
methods for liftings that work for certain dense-enough systems of
polynomials (Theorem~\ref{thm:main}), system with linear dependency in
a facial subsystem (Theorem~\ref{thm:linearly_dependent}), and for
bivariate systems (Theorem~\ref{thm:bivariate_lifting}).

It is not easy to judge the implications of lifting to the complexity
of solving a polynomial system.  The celebrated solution of Smale's
17th problem~(see \cite{lairez2017deterministic} and its references)
shows that a path in a homotopy can, on average with respect to a
suitable distribution of input systems, be tracked in polynomial time.
Finding one solution of a polynomial system is tracking such a path.
It is an interesting challenge to investigate the effects of lifting
to the computation of individual solutions.  First steps with respect
to just simple Newton iterations have been undertaken in
\cite{albersmeyer2010lifted}.  In the present paper we take the
standpoint that \emph{solving} means to determine all solutions to a
system.  For a complete analysis of the complexity of solving, one
would need to study the relation of the costs of superfluous paths
versus a more expensive tracking of fewer paths.

We begin by illustrating our lifting approach in a simple example.
\begin{ex}
  \label{ex:motivating_example}
  Consider the system
  \begin{align}
    \begin{split}
      \label{eq:ex_sys}
      0 &= (1 - x_1^2) x_2 + 2, \\
      0 &= (1 - x_1)^2 x_2 + 3.
    \end{split}
  \end{align}
  We aim to solve this system in~$(\C^\ast)^2$. The Newton polytopes
  of both polynomials agree with $P = \conv(0,e_2,2e_1+e_2)$
  (see Figure~\ref{fig:polytope1}).
\begin{figure}
	\begin{tikzpicture}[newton polytope]
		\foreach \x in {0,1,2,3}{
			\foreach \y in {0,1,2}{
				\fill[lattice point] (\x,\y) circle;
			}
		}
		\draw (0,1) -- (0,0) -- (2,1);
		\draw[ovgu_red] (2,1) -- (0,1);
		\foreach \pos in {(0,0), (0,1), (1,1), (2,1)}{
			\fill \pos circle;
		}
	\end{tikzpicture}
	\caption{The polytope $P$ of
          Example~\ref{ex:motivating_example} with the critical facial
          subsystem in red.}
	\label{fig:polytope1}
\end{figure}
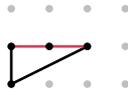
The mixed volume equals $2$.  However, in $(\C^{\ast})^{2}$ the system
has a unique solution.  According to \cite{B75}, this happens exactly if a
\emph{facial subsystem} has a root in $(\C^*)^2$ (see
Theorem~\ref{t:bernstein_crit}).  In our case this facial subsystem
corresponds to the red face in Figure~\ref{fig:polytope1}:
\begin{align}
\begin{split}
\label{eq:ex_fac_sub}
0 &= (1-x_1^2)x_2 \\
0 &= (1-x_1)^2x_2,
\end{split}
\end{align}
and has infinitely many solutions $\set{(1,x_{2}) : x_{2}\in\C^{*}}$.
Now consider the system
\begin{align}
\begin{split}
\label{eq:ex_sys_lift}
0 &= y(1+x_1)x_2 +2, \\
0 &= y(1-x_1)x_2 + 3, \\
0 &= y - (1 - x_1).
\end{split}
\end{align}
Solving the last equation for $y$ and plugging in, recovers the
original system.  Therefore the lifted system is equivalent to the
original.  In contrast, the lifted system has mixed volume
equal to~$1$, the true number of torus solutions.  What happens is
that the original system has a zero at infinity, namely $(1, \infty)$,
which can be seen by solving for~$x_2$.  This pseudo-solution is taken
care of in the lifting.  Homotopy continuation for the original system
involves tracking a path tending to infinity, while for the lifted
system this path is ignored.
\end{ex}

In order to illustrate our main lifting strategy and the necessary
tools, we discuss further details of
Example~\ref{ex:motivating_example}.
After division by $x_2$ one observes that the facial subsystem
\eqref{eq:ex_fac_sub} has a solution in $(\C^*)^2$ if and only if the
univariate polynomials $(1 - x_1^2)$ and $(1 - x_1)^2$ have a common
non-zero root, in our case~$x_{1}=1$.  Our approach is to reflect this
information in a Newton polytope, so that it contributes to the mixed
volume computation.  We achieve this by means of a polynomial division
in order to substitute the linear factor $(1-x_1)$ by a new
variable~$y$.  In the case of a linear factor, the effect of the
division on Newton polytopes, the \emph{polytope division}, can be
worked out and is a main ingredient in our method (see
Definition~\ref{d:PolyQuotient}).  In this way we arrive at an
equivalent lifted system~\eqref{eq:ex_sys_lift}.
Definition~\ref{d:lift} contains the details of lifting.
In the lifted system, the non-genericity that $y = (1-x)$ appears as a
common factor in both polynomials of \eqref{eq:ex_sys} is represented
also polyhedrally:
In this sense we view \eqref{eq:ex_sys} in a higher-dimensional space
of sparse polynomial systems, in which
having strictly
fewer solutions than the mixed volume is not encoded in the
coefficients, but in the geometry of the Newton polytopes.  Hence, the
mixed volume of the Newton polytopes of the system
\eqref{eq:ex_sys_lift} is strictly less than that of the Newton
polytopes of~\eqref{eq:ex_sys}.  The embedding of information about
the number of solutions in the convex geometry of Newton polytopes can
only work if the construction of the lifting depends on the
coefficients of the system~\eqref{eq:ex_sys}.
Theorem~\ref{thm:bivariate_lifting} shows that in the bivariate case
there always exist lifts generalizing
Example~\ref{ex:motivating_example} as long as the mixed volume bound
is not attained.

Theorem~\ref{thm:main} generalizes the above approach to systems of
$d$ polynomials in $d$ variables, but the exact lifting strategy does
not carry over.  In higher dimension the lifting becomes more involved
and more conditions are required.  The replacement of common linear
terms by new variables is implemented by polynomial division and
additional assumptions are necessary for the polyhedral geometry to
work.  A strong but natural additional assumption is that the Newton
polytopes of the subsystem for which Bernstein's criterion fails are
\emph{saturated} in the sense of Definition~\ref{d:saturated}.  The
content of this definition is best seen in
Lemma~\ref{lem:sparse_division}, which shows that a sparse system
behaves polyhedrally well under division by linear univariate terms if
and only if it is saturated.  Example~\ref{ex:poly_div} illustrates
that this well-behavedness is necessary for the lifting to work.  In
order to ensure that the mixed volume of the lifting is strictly
lower, we additionally need to enforce that the Newton polytopes of
the original system are not degenerate in a certain sense (see
Lemma~\ref{lem:technical_lemma_from_hell}).



Our work describes individual lifting steps, reducing the mixed volume
by one in Theorems~\ref{thm:main} and \ref{thm:linearly_dependent} or
the degree of a $\gcd$ in Theorem~\ref{thm:bivariate_lifting}.  Of
course one would like to be able to make several improvements.  Most
obviously it would be nice to reduce the mixed volume more by
constructing refined lifts.  If this is not possible, one would like
to lift iteratively, ideally until the mixed volume bound is sharp.
At the moment it is an open problem to determine how iterative
liftings work.
%



\subsection*{Acknowledgment} We thank Ivan Soprunov for helpful
clarifications and motivating feedback.  Paul Breiding explained
practical aspects of homotopy continuation to the authors.

\section{Preliminaries}

We abbreviate $[n] \coloneqq \{1, \ldots, n\}$. A \emph{lattice
  polytope} $P \subseteq \R^d$ is a convex polytope all of whose
vertices lie in $\Z^d$. We denote by $\Vol(P)$ the normalized volume
of $P$, which equals the standard euclidean volume multiplied by~$d!$.
The \emph{(normalized) mixed volume} of $d$-tuples of lattice
polytopes living in $\R^d$ is the unique functional satisfying
\begin{align*}
\Vol(\lambda_1 P_1 + \dots + \lambda_k P_k)
= \sum_{i_1=1}^k \dots \sum_{i_d=1}^k \lambda_{i_1} \dots \lambda_{i_d}
\MV(P_{i_1},\dots,P_{i_d}),
\end{align*}
for all choices of lattice polytopes $P_1,\dots,P_k \subset \R^d$,
non-negative scalars $\lambda_1,\dots,\lambda_k \geq 0$, and
$k \in \Z_{\geq 1}$. We refer to of \cite[Theorem and
Definition~5.1.7]{Schn14} for a detailed treatment of existence and
uniqueness of such a functional.

\subsection{The BKK theorem}
\label{sec:bkk}

Let $f \in \C[x_1^{\pm 1},\dots,x_d^{\pm 1}]$.  The \emph{support} of $f$ is
\begin{align*}
\supp f \coloneqq \{a \in \Z^d \colon \text{ the monomial
 } x^a \text{ has non-zero coefficient in } f\},
\end{align*}
and its convex hull $\NP(f) = \conv(\supp(f))$ is the \emph{Newton
  polytope} of~$f$. A \emph{monomial change of variables} is a map
transforming a system of Laurent polynomials by sending
$x_i \mapsto x^{u_i}$ for all $i \in [d]$ in each of the polynomials,
where the $u_i$ form the columns of a unimodular integer matrix $U$,
followed by multiplication of each polynomial by some monomial. Any
monomial change of variables induces a bijection between the torus
solutions of the original system and its transformation.  The
following is essentially~\cite[Theorem~A]{B75}.

\begin{thm}\label{t:bkk}
Let $f_1,\dots,f_d \in \C[x_1^{\pm 1},\dots,x_d^{\pm 1}]$ be Laurent
polynomials. Then the number of solutions of the system
$f_1 = \dots = f_d = 0$ in $(\C^*)^d$ is bounded from above by
$\MV(\NP(f_1),\dots,\NP(f_d))$.
\end{thm}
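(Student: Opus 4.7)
The plan is to prove the bound via the sparse polyhedral homotopy of Huber--Sturmfels, which is the classical route and the approach most aligned with this paper's framework of lifted start systems. The strategy proceeds in three steps: reduce to generic coefficients, build a start system from a mixed subdivision, and count paths.

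First, note that for fixed supports $A_i = \supp(f_i)$ the number of isolated $(\C^*)^d$-solutions of $f_1 = \cdots = f_d = 0$ is upper semicontinuous in the coefficients, so it suffices to prove the bound on a Zariski-open set of coefficients where all torus solutions are simple. Next, choose generic lifting functions $\omega_i : A_i \to \R$ inducing a coherent fine mixed subdivision $\mathcal{S}$ of the Minkowski sum $A_1 + \cdots + A_d$. Form the deformed system
\begin{equation*}
f_i^t(x) = \sum_{a \in A_i} c_{i,a} \, t^{\omega_i(a)} \, x^a, \quad i \in [d],
\end{equation*}
and track its $(\C^*)^d$-solutions as $t$ moves from $0$ to $1$. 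For $t$ near zero each solution admits a Puiseux expansion $x(t) = t^{-v} x_0 + (\text{lower order in } t)$, where $v \in \R^d$ is the inner normal to a facet of the lower envelope of the combined lifts and $x_0 \in (\C^*)^d$ satisfies the associated initial subsystem.

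The key observation is that the Puiseux directions $v$ correspond bijectively to the mixed cells $C = C_1 + \cdots + C_d$ of $\mathcal{S}$, namely those cells in which each summand $C_i$ is an edge and $C_1, \ldots, C_d$ are linearly independent. At such a cell the initial system is binomial and has exactly $\Vol(C)$ nondegenerate $(\C^*)^d$-solutions, by the classical formula expressing the root count of a square binomial system as the lattice index of the edge directions. Summing over all mixed cells, the total number of start solutions is $\sum_C \Vol(C) = \MV(A_1, \ldots, A_d)$, which is the combinatorial characterization of mixed volume via fine mixed subdivisions. As $t$ grows to $1$, each path either terminates at a torus solution of the target system or diverges to infinity or to the boundary of the torus. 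Hence the number of isolated torus solutions at $t = 1$ is at most the number of paths, giving the bound $\MV(\NP(f_1), \ldots, \NP(f_d))$ since $\NP(f_i) = \conv(A_i)$.

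The main technical obstacle is the Puiseux-series analysis at $t \to 0^+$: one must verify (a) that generic $\omega_i$ and $c_{i,a}$ prevent path collisions for $t \in (0, 1]$, (b) that the mixed cells are in bijection with the start solutions in the limit $t \to 0^+$, and (c) that the binomial initial system at each mixed cell is nondegenerate with exactly $\Vol(C)$ torus solutions. These three points are the essential content of the proof and require careful genericity arguments, but they are standard in the polyhedral-homotopy literature and ultimately reduce to a calculation with Smith normal forms of the lattice generated by the edge directions of each mixed cell.
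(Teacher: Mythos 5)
The paper does not prove Theorem~\ref{t:bkk} at all: it is the classical BKK theorem, quoted as background with references to Bernstein~\cite{B75} and \cite[Section~7.5]{CLO05}, so there is no in-paper proof to compare against. Your outline follows the standard polyhedral-homotopy route of Huber--Sturmfels, which is essentially the argument presented in the cited reference, so you are on the canonical path rather than a genuinely different one.

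As a proof, however, the write-up has concrete gaps. First, the reduction to generic coefficients is stated backwards: what you need is that every isolated torus solution of the special system persists under small perturbation of the coefficients (a conservation-of-number/local-degree argument in a small ball around each isolated root), so that the special count is bounded by the generic count; ``upper semicontinuity of the number of isolated solutions'' as literally stated would give the reverse inequality. Second, the step ``the number of torus solutions at $t=1$ is at most the number of paths'' requires that every torus solution of the target system is the endpoint of some path emanating from $t$ near $0$, i.e.\ that no solution branch appears only at $t=1$; this is exactly where genericity of the lifting functions and coefficients enters, and it is not addressed. Third, the items you label (a)--(c) --- no path collisions on $(0,1]$, the bijection between mixed cells and Puiseux branches at $t\to 0^+$, and the count of $\Vol(C)$ nondegenerate torus roots of each binomial initial system via Smith normal form --- constitute essentially the entire technical content of the theorem; declaring them standard defers the proof rather than supplying it. So what you have is an accurate roadmap of the known proof, not a self-contained argument; for a result this paper itself only cites, the cleanest fix is to cite \cite{B75} or \cite[Section~7.5]{CLO05} outright, or else carry out (a)--(c) and the persistence argument in detail.
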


In order to formulate the conditions under which the above bound is
attained, we introduce the notion of a facial system.  For any
polytope $P \subset \R^d$ and any non-zero vector $u \in \R^d$, we
denote by $P^u$ the face of $P$ maximizing the functional
$\langle \cdot , u \rangle$.  For a polynomial
$f \in \C[x_1,\dots,x_d]$, we denote by $f^u$ the polynomial
consisting only of those terms of $f$ that are supported on the
face~$\NP(f)^u$.  Now this is essentially~\cite[Theorem~B]{B75}.

\begin{thm} \label{t:bernstein_crit}
Let $f_1,\dots,f_d \in \C[x_1^{\pm 1},\dots,x_d^{\pm 1}]$ be Laurent
polynomials. Then the number of solutions of the system
$f_1 = \dots = f_d = 0$ in $(\C^*)^d$ equals
$\MV(\NP(f_1),\dots,\NP(f_d))$
(counting multiplicities) if and only if for any $u \in \R^d \setminus \{0\}$ the
facial system $f_1^u=\dots=f_d^u=0$ has no solution in $(\C^*)^d$.
\end{thm}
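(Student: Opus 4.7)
The plan is to prove Bernstein's criterion via the polyhedral homotopy / toric deformation method. Choose generic lifting functions $\omega_i\colon \supp f_i \to \R$ and form the one-parameter family
\[
F_i(x,t) \;=\; \sum_{a \in \supp f_i} c_{i,a}\, t^{\omega_i(a)}\, x^a, \qquad i = 1, \ldots, d,
\]
where $c_{i,a}$ are the coefficients of $f_i$. At $t = 1$ this recovers the original system; for generic $\omega = (\omega_1, \ldots, \omega_d)$ it induces a coherent fine mixed subdivision of $P_1 + \cdots + P_d$ whose mixed cells, counted with appropriate multiplicities, number exactly $\MV(\NP(f_1), \ldots, \NP(f_d))$.

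First I would recall (citing Huber--Sturmfels or the standard toric-deformation reference) that every mixed cell $C = C_1 + \cdots + C_d$ yields a binomial initial system whose unique torus root serves as the leading Puiseux coefficient of a branch $x(t)$ of solutions of $F(\cdot,t) = 0$. Thus the $\MV$ branches are accounted for by mixed cells, and the question reduces to: which of these branches converge to genuine torus solutions at $t = 1$ (equivalently, do not escape to the boundary as $t \to 0$)?

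The key analytic step is the \emph{valuation-direction} lemma. Writing a Puiseux branch as $x(t) = c\, t^{-v} + \text{lower-order terms in the partial order induced by } v$ with $c \in (\C^*)^d$ and $v \in \R^d$, substitution yields
\[
F_i(x(t), t) \;=\; t^{h_i(v)}\bigl(f_i^{\,v}(c) + o(1)\bigr),
\]
where $h_i(v) = \max_{a \in \supp f_i}\bigl(\omega_i(a) + \langle v, a\rangle\bigr)$ and $f_i^{\,v}$ is the facial polynomial defined before the theorem. For $F_i(x(t), t) \equiv 0$ to hold we must therefore have $f_i^{\,v}(c) = 0$ for all $i$. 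Hence a branch with direction $v \neq 0$ exists if and only if the facial system in direction $v$ has a solution $c \in (\C^*)^d$; such a branch escapes the torus and is lost from the count at $t = 1$. Branches with $v = 0$ converge to honest torus points and are counted (with multiplicity) by the BKK formula.

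Both directions now follow. If every facial system has no torus root, every branch has $v = 0$, so all $\MV$ paths terminate at torus solutions of $f_1 = \cdots = f_d = 0$, proving equality. Conversely, a torus root of some facial system in direction $v \neq 0$ produces at least one diverging branch, strictly dropping the count below $\MV$. The main obstacle is making the Puiseux-series analysis fully rigorous — in particular, ensuring that the mixed-cell branches are \emph{all} the branches and that multiplicities match on both sides. The cleanest way to sidestep this technicality is to compactify $(\C^*)^d$ by a smooth toric variety $X_\Sigma$ whose fan refines the normal fans of all $\NP(f_i)$, extend each $f_i$ to a section of a line bundle $L_i$ with $L_1 \cdots L_d = \MV(\NP(f_1), \ldots, \NP(f_d))$, and observe that the Bézout-type intersection number drops on the open torus precisely when some zero lands on a torus-invariant boundary orbit — which, by the local description of torus orbits, occurs exactly when the facial system associated with the corresponding cone has a torus solution.
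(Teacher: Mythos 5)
The paper does not prove this statement at all: Theorem~\ref{t:bernstein_crit} is Bernstein's classical criterion, quoted from \cite{B75} (see also \cite{CLO05}), so there is no in-paper argument to compare with and your proposal must stand on its own. As written it follows the right classical line (polyhedral homotopy \`a la \cite{huberSturmfels}, or toric compactification), but the central ``valuation-direction lemma'' is incorrect in the form you state it. Once you insert the generic lifting $t^{\omega_i(a)}$, the leading form of $F_i$ along a branch $x(t)=c\,t^{-v}+\cdots$ is the initial form with respect to the \emph{lifted} weight, i.e.\ the terms attaining $\max_a(\omega_i(a)+\langle v,a\rangle)$, which for generic $\omega_i$ is supported on a face of the lifted configuration and is not $f_i^{\,v}$; the displayed identity $F_i(x(t),t)=t^{h_i(v)}\bigl(f_i^{\,v}(c)+o(1)\bigr)$ therefore fails. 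A second, more structural gap: Puiseux branches and their valuations describe the degeneration as $t\to 0$, whereas the theorem concerns the fibre $t=1$, where the given (possibly very non-generic) coefficients sit. ``Not escaping to the boundary as $t\to 0$'' is not equivalent to ``converging to a torus solution at $t=1$'': paths can diverge, leave the torus, or collide precisely as $t\to 1$, and ruling this out (and matching multiplicities) is exactly the content of the theorem, not something the mixed-cell count at $t\approx 0$ gives you.

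The converse implication is also only asserted: you must show that a torus root of some facial system forces a \emph{strict} drop of the multiplicity count, and neither the ``diverging branch'' claim nor the toric fallback closes this. On a smooth compatible toric variety, a facial root does give a common zero of the extended sections on a boundary orbit, but if that zero lies on a positive-dimensional component of the zero scheme, its excess contribution to $L_1\cdots L_d$ is only nonnegative for nef bundles, so strict inequality does not follow without an extra argument; and in the forward direction you still need to show that absence of boundary zeros forces the zero scheme to be finite (it is then a complete subscheme of the affine torus, hence finite) before the Koszul/intersection-number computation identifies the count with $\MV(\NP(f_1),\dots,\NP(f_d))$. Edge cases with non-essential tuples ($\MV=0$) and positive-dimensional torus components also need explicit treatment. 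So: right strategy, but the key lemma is false as stated and both directions still require the substantive work.
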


Up to an appropriate monomial change of variables, the facial system
corresponding to any $u \in \R^d \setminus \{0\}$ is a system of $d$
equations in $d-1$ variables (as the corresponding Newton polytopes
live in parallel $(n-1)$-dimensional hyperplanes orthogonal to $u$).
In particular, generically, none of these systems has a solution and
the BKK bound is attained.

\subsection{Liftings of sparse polynomial systems}
For point configurations $A_1,\dots,A_d \subseteq \Z^{d}$,
we denote by $\C[A_1,\dots,A_d]$ the vector subspace of
$\left(\C[x_1^{\pm 1}, \ldots, x_d^{\pm 1}]\right)^{d}$ consisting of all
$(f_1, \ldots, f_d)$ with $\supp(f_i) \subseteq A_i$ for all $i \in [d]$.
For a tuple of lattice polytopes $(P_1, \ldots, P_d)$ we set
$\C[P_1, \ldots, P_d] \coloneqq \C[P_1 \cap \Z^d, \ldots, P_d \cap \Z^d]$.

\begin{defn}[Lifting of a system]
\label{d:lift}
For a tuple
$F=(f_1,\dots,f_d) \in (\C[x_1^{\pm 1},\dots,x_d^{\pm 1}])^d$,
\begin{align*}
\tilde{F}=(\tilde{f}_1,\dots,\tilde{f}_d,
h_1,\dots,h_k) \in (\C[x_1^{\pm 1},\dots,x_d^{\pm 1},y_1,
\dots,y_k])^{d+k}
\end{align*}
is a \emph{lifting} of the system $f_1 = \dots = f_d = 0$, if a point
$(\alpha_1,\dots,\alpha_d) \in (\C^*)^d$ is a solution of $F$ if and only if there exists a solution
$(\alpha_1,\dots,\alpha_d,\beta_1,\dots,\beta_k) \in (\C^*)^{d} \times
\C^k$ of $\tilde{F}$.
\end{defn}

\begin{rem}
  For our purposes it is important to allow zeroes in the last $k$
  coordinates of the roots of the lifted system.  See
  Remark~\ref{rem:caveat} for possible caveats in lifting strategies.
\end{rem}

For any tuple $(f_1,\dots,f_d) \in (\C[x_1^{\pm 1},\dots,x_d^{\pm 1}])^d$
we denote by $\Sm(f_1,\dots,f_d)$ its number of isolated solutions
(counting multiplicities) in the complex torus $(\C^*)^d$.

\begin{defn}[Solution-preserving map]
\label{d:solution-preserving}
Let $A_1,\dots,A_{d_1} \subset \Z^{d_1}$ and
$B_1,\dots,B_{d_2} \subset \Z^{d_2}$.  For a subset
$U \subset \C[A_1,\dots,A_{d_1}]$, we call a map
$\phi \colon U \to \C[B_1,\dots,B_{d_2}]$ \emph{solution-preserving},
if for any tuple $(f_1,\dots,f_{d_1})\in U$, one has
$\Sm((f_1,\dots,f_{d_1}))=\Sm(\phi(f_1,\dots,f_{d_1}))$.
\end{defn}

The following is a direct consequence of Theorem~\ref{t:bkk}.
\begin{prop} \label{prop:trivial_lift} Let
  $A_1,\dots,A_{d_1} \subset \Z^{d_1}$ and
  $B_1,\dots,B_{d_2} \subset \Z^{d_2}$. Let
  $U \subset \C[A_1,\dots,A_{d_1}]$ and
  $V \subset \C[B_1,\dots,B_{d_2}]$ be Zariski-open dense sets. If
  there exists a solution-preserving map
  $\phi_1 \colon U \to \C[B_1,\dots,B_{d_2}]$, then one has
  $\MV(A_1,\dots,A_{d_1}) \leq \MV(B_1,\dots,B_{d_2})$.  In
  particular, if there additionally exists a solution-preserving map
  $\phi_2 \colon V \to \C[A_1,\dots,A_{d_1}]$, one has
  $\MV(A_1,\dots,A_{d_1})=\MV(B_1,\dots,B_{d_2})$.
\end{prop}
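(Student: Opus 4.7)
The plan is to reduce the inequality to the BKK theorem (Theorem~\ref{t:bkk}) combined with Bernstein's genericity criterion (Theorem~\ref{t:bernstein_crit}) via a generic point argument. Since $U$ is Zariski-open dense, it suffices to exhibit a ``good'' system in $U$ whose number of torus solutions is precisely the expected mixed volume; applying $\phi_1$ and invoking BKK on the image then yields the inequality.

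First I would define the subset $W \subset \C[A_1, \ldots, A_{d_1}]$ of tuples $(f_1, \ldots, f_{d_1})$ such that (i) every element of $A_i$ appears with non-zero coefficient in $f_i$, so that $\NP(f_i) = \conv(A_i)$, and (ii) Bernstein's condition holds, i.e., no facial subsystem has a solution in $(\C^\ast)^{d_1}$. Condition (i) is clearly Zariski-open and dense, while density of (ii) is the classical fact alluded to after Theorem~\ref{t:bernstein_crit}: after the appropriate monomial change of variables, each facial subsystem is overdetermined (more equations than variables), so its consistency cuts out a proper Zariski-closed subset of the coefficient space, and only finitely many faces need to be considered. Hence $U \cap W$ is a Zariski-open dense subset of $\C[A_1, \ldots, A_{d_1}]$, in particular non-empty.

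Next, pick any $F = (f_1, \ldots, f_{d_1}) \in U \cap W$. By Theorem~\ref{t:bernstein_crit},
\[ \Sm(F) \;=\; \MV(\NP(f_1), \ldots, \NP(f_{d_1})) \;=\; \MV(A_1, \ldots, A_{d_1}). \]
Setting $(g_1, \ldots, g_{d_2}) = \phi_1(F) \in \C[B_1, \ldots, B_{d_2}]$, the solution-preserving property of $\phi_1$ gives $\Sm(g_1, \ldots, g_{d_2}) = \Sm(F)$. Since $\supp(g_j) \subseteq B_j$ implies $\NP(g_j) \subseteq \conv(B_j)$, BKK combined with monotonicity of mixed volume under polytope inclusion yields
\[ \MV(A_1, \ldots, A_{d_1}) \;=\; \Sm(g_1, \ldots, g_{d_2}) \;\leq\; \MV(\NP(g_1), \ldots, \NP(g_{d_2})) \;\leq\; \MV(B_1, \ldots, B_{d_2}). \]
For the ``in particular'' statement, I would apply the very same argument with the roles of $(U, \phi_1)$ and $(V, \phi_2)$ exchanged to obtain the reverse inequality $\MV(B_1, \ldots, B_{d_2}) \leq \MV(A_1, \ldots, A_{d_1})$, hence equality.

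The only place requiring any real care is the density of the Bernstein-genericity set $W$ (which is standard, and is even tacitly used in the paper's paragraph following Theorem~\ref{t:bernstein_crit}), and the monotonicity of mixed volume under inclusion. Neither is a substantial obstacle, so I expect the proof to be short and essentially mechanical once the generic point is selected.
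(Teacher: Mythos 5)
Your proposal is correct and is essentially the argument the paper intends: the proposition is stated there without proof as a direct consequence of the BKK theorem, and your generic-point argument (a generic system in $U$ attains the mixed volume bound by Bernstein genericity; then apply $\phi_1$, BKK, and inclusion-monotonicity of the mixed volume, with the symmetric argument for the equality) makes exactly that precise. The only nitpick is that the locus where a facial subsystem is consistent is constructible rather than Zariski-closed, but it is contained in a proper closed subset of the coefficient space, which is all the density of your set $W$ requires.
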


\subsection{Monotonicity of the Mixed Volume}
\begin{defn}
\label{d:essentiality}
A tuple of lattice polytopes $P_1,\dots,P_d \subset \R^d$ is
\emph{essential} if one of the following equivalent conditions holds:
\begin{enumerate}
\item \label{it:mv>0} $\MV(P_1,\dots,P_d) > 0$,
\item there exists a choice of segments $s_i \subseteq P_i$
for $i \in [d]$ such that $s_1,\dots,s_d$ are linearly
independent,
\item \label{it:trans} there is no subset $\emptyset \neq I \subseteq [d]$ for
which the polytopes $\{P_i \colon i \in I \}$ can be translated
to a common $(|I|-1)$-dimensional linear subspace of $\R^d$,
\item \label{it:mink} there is no subset $\emptyset \neq I \subseteq [d]$
satisfying
\begin{align*}
\dim \left(\sum_{i \in I} P_i \right) \leq |I|-1.
\end{align*}
\end{enumerate}
\end{defn}

The equivalence of \eqref{it:trans} and \eqref{it:mink} is
straightforward to verify and \cite[Theorem~5.1.8]{Schn14} contains a
proof of the remaining equivalences.

It is well-known that the mixed volume is inclusion-monotonous in each
argument.  It is, however, not strictly monotonous as one can replace
several polytopes in a tuple by strictly smaller ones without
decreasing the mixed volume.  The following theorem of Bihan and
Soprunov is a characterization of when the mixed volume of a subtuple
is strictly smaller than that of the original tuple.  It is a crucial
tool in the proofs of both our main results Theorem~\ref{thm:main} and
Theorem~\ref{thm:linearly_dependent}.

\begin{thm}{\cite[Theorem~3.3]{BS19}}
\label{thm:bihan-soprunov}
Let $P_1,\dots,P_d \subseteq \R^d$ and
$P_1',\dots,P_d' \subseteq \R^d$ tuples of polytopes such that
$P_i' \subseteq P_i$ for every $i \in [d]$. Given $u \in \R^d$, denote
$T_u = \{i \in [d] \colon P_i' \text{ touches } P_i^u \}$. Then
$\MV(P_1,\dots,P_d) > \MV(P_1',\dots,P_d')$ if and only if there
exists a $u \in \R^{d}$ such that the tuple
$\{P_i^u \colon i \in T_u \} \cup \{P_i \colon i \in [d] \setminus T_u
\}$ is essential.
\end{thm}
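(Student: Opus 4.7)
The plan is to establish the criterion via the integral representation of mixed volume against the mixed surface area measure, combined with a telescoping reduction. Define $R^{(k)} = (P_1', \ldots, P_k', P_{k+1}, \ldots, P_d)$ for $k = 0, \ldots, d$, so the difference telescopes as
\[
\MV(P_1, \ldots, P_d) - \MV(P_1', \ldots, P_d') = \sum_{k=1}^{d} \bigl(\MV(R^{(k-1)}) - \MV(R^{(k)})\bigr),
\]
where each summand is a single-polytope-replacement difference. By classical monotonicity each summand is non-negative, so $\MV(P_1, \ldots, P_d) > \MV(P_1', \ldots, P_d')$ holds iff at least one summand is strictly positive.

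For a single replacement (say at index $k$), I would invoke the representation
\[
\MV(Q_1, \ldots, Q_{d-1}, P_k) = \frac{1}{d} \int_{S^{d-1}} h_{P_k}(u)\, dS(Q_1, \ldots, Q_{d-1}; u),
\]
where $h_P$ is the support function and $S(Q_1, \ldots, Q_{d-1}; \cdot)$ is the mixed surface area measure on $S^{d-1}$. The $k$-th summand becomes
\[
\frac{1}{d}\int_{S^{d-1}} \bigl(h_{P_k}(u) - h_{P_k'}(u)\bigr)\, dS(P_1', \ldots, P_{k-1}', P_{k+1}, \ldots, P_d; u),
\]
and since the integrand is non-negative, strict positivity is equivalent to the open set $\{u : h_{P_k'}(u) < h_{P_k}(u)\} = \{u : k \notin T_u\}$ meeting $\supp S$. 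The support of the mixed surface area measure is classically described as the set of $u$ for which the $(d-1)$-tuple of $u$-faces $(Q_1^u, \ldots, Q_{d-1}^u)$ is essential inside $u^\perp$; via Definition~\ref{d:essentiality}\eqref{it:mink}, this translates into essentiality, as a tuple in $\R^d$, of the mixed collection in which $Q_i$ contributes its $u$-face if $i \in T_u$ and its full body if $i \notin T_u$, with $P_k$ in position $k$ supplying the missing direction along $u$.

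Assembling the telescoping, if $\MV(P_1,\ldots,P_d) > \MV(P_1',\ldots,P_d')$ the first strictly positive summand provides an index $k$ and a direction $u$; one then checks that the essential tuple thereby obtained matches the one prescribed by $T_u$ in the statement, since the telescoping has already replaced $P_i$ by $P_i'$ for $i < k$ (so these indices contribute their $u$-faces exactly when they touch), while for $i > k$ the body $P_i$ itself is still present. Conversely, given such a $u$, one locates a stage of the telescoping at which the corresponding integrand is positive on $\supp S$. The main obstacle, and the step requiring most care, is precisely this matching between the indices arising in the telescoping bookkeeping and the set $T_u$ defined from the originals $P_i' \subseteq P_i$; in particular one must verify that essentiality in the quotient $u^\perp$ (as delivered by the support characterization of $S$) is equivalent, after reintroducing $P_k$, to essentiality in $\R^d$ of the full mixed tuple as stated, which is where the dimension-count \eqref{it:mink} of Definition~\ref{d:essentiality} does the decisive bookkeeping.
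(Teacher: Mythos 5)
First, a remark on scope: the paper does not prove this statement at all -- it is imported verbatim from \cite[Theorem~3.3]{BS19} -- so your attempt has to be judged on its own terms. Your \emph{necessity} direction (strict inequality implies an essential tuple) is essentially sound: for polytopes the mixed surface area measure is atomic with mass at $u$ equal to $\MV_{u^\perp}$ of the $u$-faces, so a strictly positive telescoping summand at stage $k$ yields $u$ with $h_{P_k'}(u)<h_{P_k}(u)$ (hence $k\notin T_u$) and with $(P_1'^u,\dots,P_{k-1}'^u,P_{k+1}^u,\dots,P_d^u)$ essential in $u^\perp$; since $P_i'^u\subseteq P_i^u$ for $i\in T_u$ and $P_i'^u,P_i^u\subseteq P_i$ for $i\notin T_u$, and since $h_{P_k}(u)>h_{P_k'}(u)$ forces $P_k$ to contain a segment transverse to $u^\perp$, the tuple in the statement is essential. (Small correction: for $i>k$ the measure sees the face $P_i^u$, not the body $P_i$ as you write; this is harmless here because essentiality only needs to be passed \emph{upward} to larger polytopes.)

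The genuine gap is the \emph{sufficiency} direction, which you compress into ``one locates a stage of the telescoping at which the corresponding integrand is positive on $\supp S$.'' The hypothesis is essentiality in $\R^d$ of a tuple built from the faces $P_i^u$ at touching indices and the \emph{full bodies} $P_i$ at non-touching indices, whereas positivity of the stage-$k$ integrand at $u$ requires essentiality in $u^\perp$ of a tuple of $u$-faces, namely $P_i'^u$ at already-replaced indices and $P_i^u$ at the remaining ones. Essentiality is monotone under enlarging polytopes, not under shrinking them, so the hypothesized essential tuple gives no control over these face-tuples: at a touching index $P_i'^u$ may be a proper (even zero-dimensional) subpolytope of $P_i^u$, and at a non-touching index $i\neq k$ the face $P_i^u$ may be far smaller than $P_i$. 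In fact no ordering need produce a positive term at the given $u$ at all: take $d=2$, $P_1=P_2=\conv\{(0,0),(1,0),(0,1)\}$, $P_1'=P_2'=\{(0,0)\}$ and $u=(1,0)$. Then $T_u=\emptyset$, the tuple $\{P_1,P_2\}$ is essential, and indeed $\MV(P_1,P_2)=1>0=\MV(P_1',P_2')$, but every telescoping summand evaluated at this $u$ vanishes because all the relevant $u$-faces are single vertices; the strict increase is only visible at other directions $v$, about which the hypothesis says nothing directly. Arguing that the \emph{total} difference is positive and then extracting a stage is circular, since that positivity is exactly what is to be proved. So as written your argument establishes only the necessity half; the sufficiency half is the substantive content of Bihan--Soprunov's theorem and requires a genuinely different argument than the telescoping identity.
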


\section{Liftings via polynomial division}

\begin{defn}\label{d:PolyQuotient}
  Let $P \subset \R_{\geq 0}^d$ be a lattice polytope.  The \emph{quotient}
  $\Qm_i(P)$ and \emph{remainder} $\Rm_i(P)$ of $P$ are
\begin{align*}
  \Qm_i(P) &= \conv(x - e_i, x - \langle x,e_i \rangle e_i \colon x \in P \cap \Z^d, \langle x , e_i \rangle > 0), \\
  \Rm_i(P) &= \conv(x - \langle x,e_i \rangle e_i \colon x \in P \cap \Z^d).
\end{align*}
\end{defn}

\begin{lem}
\label{lem:sparse_division}
Let $f,q,r \in \C[x_1,\dots,x_d]$ such that $q$ and $r$ are quotient
and remainder, respectively, of the polynomial division of $f$ by
$(x_i-\alpha)$ for some $\alpha \in \C^\ast$. Then
$N(q) \subseteq \Qm_i(N(f))$ and $N(r) \subseteq \Rm_i(N(f))$.
\end{lem}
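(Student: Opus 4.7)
The plan is to perform the polynomial division by $(x_i-\alpha)$ explicitly and then match each monomial appearing in the quotient and the remainder to a lattice point in the generating set of $\Qm_i(\NP(f))$ or $\Rm_i(\NP(f))$. First I would regard $f$ as a univariate polynomial in $x_i$ over $\C[x_1,\ldots,\hat{x}_i,\ldots,x_d]$ and group its terms by the exponent of $x_i$:
\[
f = \sum_{k=0}^m x_i^k f_k, \qquad \text{where } f_k = \sum_{\substack{a \in \supp(f) \\ a_i = k}} c_a\, x^{a-k e_i}.
\]
Applying the elementary identity $x_i^k = (x_i-\alpha)\bigl(\sum_{j=0}^{k-1}\alpha^j x_i^{k-1-j}\bigr)+\alpha^k$ to each summand then produces the closed forms
\[
q = \sum_{k=1}^m f_k \sum_{j=0}^{k-1}\alpha^j x_i^{k-1-j}, \qquad r = \sum_{k=0}^m \alpha^k f_k.
\]

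From these expressions I would read off the two supports. Every monomial of $r$ has the shape $c_a\alpha^{a_i}\, x^{a-a_i e_i}$ for some $a\in\supp(f)$, so $\supp(r)$ is contained in the generating set $\{a-\langle a,e_i\rangle e_i \colon a\in\supp(f)\}$ of $\Rm_i(\NP(f))$ from Definition~\ref{d:PolyQuotient}; taking convex hulls yields the desired inclusion $\NP(r)\subseteq \Rm_i(\NP(f))$. For $q$ an analogous expansion shows that every monomial has shape $c_a\alpha^{j}\, x^{a-\ell e_i}$ for some $a\in\supp(f)$ with $a_i > 0$ and some integer $1\le \ell\le a_i$, so it suffices to prove that each such point $a-\ell e_i$ lies in $\Qm_i(\NP(f))$.

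This last step is the only one beyond routine bookkeeping. The boundary values $a-e_i$ (at $\ell=1$) and $a-a_i e_i$ (at $\ell=a_i$) are both directly among the generators of $\Qm_i(\NP(f))$ in Definition~\ref{d:PolyQuotient}, and for intermediate $\ell$ the point $a-\ell e_i$ lies on the segment between these two endpoints via the convex combination
\[
a-\ell e_i = \frac{a_i-\ell}{a_i-1}(a-e_i)+\frac{\ell-1}{a_i-1}(a-a_i e_i)
\]
when $a_i>1$; the case $a_i=1$ forces $\ell=1$ and is covered directly. Passing to the convex hull then gives $\NP(q)\subseteq\Qm_i(\NP(f))$. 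I do not foresee any real obstacle; the only subtle point is that Definition~\ref{d:PolyQuotient} bundles precisely the two families of generators needed for the segment argument, and the $a-e_i$ generators are essential to reach the quotient's $\ell=1$ monomials.
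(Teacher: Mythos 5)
Your proof is correct and follows essentially the same route as the paper's: both track, monomial by monomial, which exponents the division by $(x_i-\alpha)$ can produce in $q$ and $r$ and match them against the generating points in Definition~\ref{d:PolyQuotient}. Your closed-form expansion via the geometric-sum identity and the explicit convex combination for the intermediate points $a-\ell e_i$ merely spell out details that the paper's step-by-step division argument leaves implicit.
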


\begin{proof}
  For any monomial $x_1^{a_1} \cdots x_i^{a_i} \cdots x_d^{a_d}$ with
  $a_i > 0$, polynomial division by $(x_i - \alpha)$ gives
  \begin{align*}
    x_1^{a_1} \cdots x_i^{a_i} \cdots x_d^{a_d} =
    (x_1^{a_1} \cdots x_i^{a_i-1} \cdots x_d^{a_d})(x_i - \alpha)
    + \alpha (x_1^{a_1} \cdots x_i^{a_i-1} \cdots x_d^{a_d}).
  \end{align*}
  Therefore, for every monomial
  $x_1^{a_1} \cdots x_i^{a_i} \cdots x_d^{a_d}$ occurring in $f$, the
  division algorithm can only produce monomials of the form
  $x_1^{a_1} \cdots x_i^{b_i} \cdots x_d^{a_d}$ for $0 \leq b_i < a_i$ in $q$, showing $N(q) \subseteq \Qm_i(N(f))$.\\
  The only monomials which cannot be divided by $(x_i - \alpha)$ are
  those in which $x_i$ does not occur.  So the only monomials
  which can occur in the remainder $r$ are of the form
  $x_1^{a_1} \cdots \widehat{x_i^{a_i}} \cdots x_d^{a_d}$ where
  $x_1^{a_1} \cdots x_i^{a_i} \cdots x_d^{a_d}$ occurs in $f$ for some
  $a_i \geq 0$. This shows $N(r) \subseteq \Rm_i(N(f))$.
\end{proof}

For a generic choice of coefficients of $f$ in $\C[\supp(f)]$, it can
be shown that equality holds in both cases of
Lemma~\ref{lem:sparse_division}.  The proof consists of analyzing in
each step of polynomial division which conditions on the coefficients
accumulate.

\begin{ex}
\label{ex:poly_div}
Consider a polynomial of the form
\begin{align*}
f(x_1,x_2) = a + b x_1^2 + c x_1^2 x_2^2.
\end{align*}
Division of $f$ by a term $(x_1 - \alpha)$ results in a decomposition
\begin{align*}
f(x_1,x_2) &= (c x_1x_2^2+ \alpha c x_2^2
+ b x_1 + \alpha b)(x_1 - \alpha) +
\alpha^2 c x_2^2 + \alpha^2 b + a \\
&= Q(x_1,x_2)(x_1 - \alpha) + R(x_2),
\end{align*}
where one has $\NP(Q) = \Qm_1(\NP(f))$ and
$\NP(R)= \Rm_1(\NP(f))$ (if $a,b,c$ are sufficiently general).  For different polynomials
$Q^\prime(x_1,x_2) \in \C[\Qm_1(\NP(f))]$ and $R^\prime(x_1) \in \C[\Rm_1(\NP(f))]$ with the same Newton polytopes as $Q$ and $R$,
respectively, the polynomial
\begin{align*}
f^\prime(x_1,x_2) \coloneqq Q^\prime(x_1,x_2)(x_1 - \alpha) + R^\prime(x_1),
\end{align*}
generally has a Newton polytope strictly larger than~$\NP(f)$.
To see this let $\beta_1$ be the coefficient of the monomial $x_2^2$ in
$Q^\prime$ and $\beta_2$ the coefficient of the same monomial in
$R^\prime$. Then the coefficient of $x_2^2$ in $f^\prime$ equals
$- \alpha \beta_1 + \beta_2$, while $f$ does not contain the monomial $x_2^2$
at all (see Figure~\ref{fig:poly_div}).
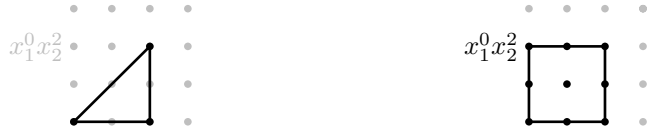
\begin{figure}
	\begin{tikzpicture}[newton polytope]
		\foreach \x in {0,1,2,3}{
			\foreach \y in {0,1,2,3}{
				\fill[lattice point] (\x,\y) circle;
			}
		}
		\draw (0,0) -- (2,0) -- (2,2) -- cycle;
		\foreach \pos in {(0,0), (2,0), (2,2)}{
			\fill \pos circle;
		}
		\node[left, lattice point] at (0,2) {$x_1^0x_2^2$};
	\end{tikzpicture}
	\hfil
	\begin{tikzpicture}[newton polytope]
		\foreach \x in {0,1,2,3}{
			\fill[lattice point]
				(\x,3) circle[]
				(3,\x) circle[];
		}
		\draw (0,0) -- (2,0) -- (2,2) -- (0,2) -- cycle;
		\foreach \x in {0,1,2}{
			\foreach \y in {0,1,2}{
				\fill (\x,\y) circle;
			}
		}
		\node[left] at (0,2) {$x_1^0x_2^2$};
	\end{tikzpicture}
	\caption{Newton polytopes $\NP(f) \not\ni (0,2)$ and $\NP(f') \ni (0,2)$ of Example~\ref{ex:poly_div} for generic $Q'$ and $R'$.}
	\label{fig:poly_div}
\end{figure}
\end{ex}

\begin{defn}
	\label{d:saturated}
	A lattice polytope $P \subset \R_{\geq 0}^d$ is
	\emph{$i$-saturated} if
        $\Rm_i(P) = P \cap \{\langle e_i, \cdot \rangle = 0\}$.
	The polytope $P$ is \emph{$(1,\dots,k)$-saturated} if
	\begin{enumerate}
		\item $P$ is $1$-saturated, and
		\item $\Rm_{(1,\dots,i)}(P) \coloneqq \Rm_i(\Rm_{i-1}(\dots(\Rm_1(P))))$ is $(i+1)$-saturated for all $i \in [k-1]$.
	\end{enumerate}
	Finally, $P$ is \emph{saturated} if it is
        $(1,\dots,d)$-saturated.  All definitions here extend to
        polynomials considering their Newton polytopes.
\end{defn}

See Figures~\ref{fig:saturated_polygon:1} and \ref{fig:saturated_polygon:2} for an example.

\begin{figure}
	\begin{tikzpicture}[newton polytope]
		\foreach \x in {0,1,2,3}{
			\foreach \y in {0,1,2,3}{
				\fill[lattice point] (\x,\y) circle;
			}
		}
		\draw (0,0) -- (2,0) -- (0,2) -- cycle;
		\foreach \pos in {(0,0), (1,0), (2,0), (1,1), (0,2), (0,1)}{
			\fill \pos circle;
		}
		\node[below] at (1,0) {$P\mathstrut$};
	\end{tikzpicture}
	\hfil
	\begin{tikzpicture}[newton polytope]
		\foreach \x in {0,1,2,3}{
			\foreach \y in {0,1,2,3}{
				\fill[lattice point] (\x,\y) circle;
			}
		}
		\draw (0,0) -- (0,2);
		\foreach \y in {0,1,2}{
			\fill (0,\y) circle;
		}
		\node[below] at (0,0) {$\Rm_1(P)$};
	\end{tikzpicture}
	\hfil
	\begin{tikzpicture}[newton polytope]
		\foreach \x in {0,1,2,3}{
			\foreach \y in {0,1,2,3}{
				\fill[lattice point] (\x,\y) circle;
			}
		}
		\draw (0,0) -- (2,0) -- (1,1) -- (0,1) -- cycle;
		\foreach \x in {0,1,2}{
			\fill (\x,0) circle;
		}
		\foreach \x in {0,1}{
			\fill (\x,1) circle;
		}
		\node[below] at (1,0) {$\Qm_1(P) + [0,e_1]$};
	\end{tikzpicture}
	\caption{%
		Example of a 1-saturated polygon $P$.
	}
	\label{fig:saturated_polygon:1}
\end{figure}
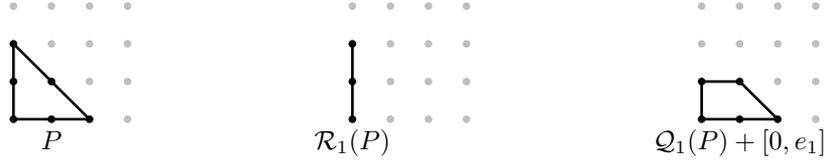

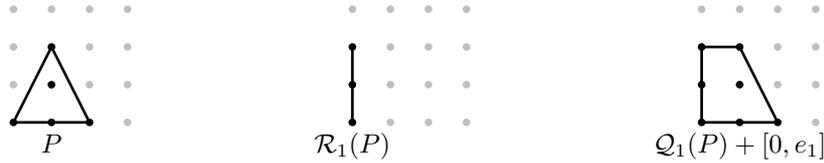
\begin{figure}
	\begin{tikzpicture}[newton polytope]
		\foreach \x in {0,1,2,3}{
			\foreach \y in {0,1,2,3}{
				\fill[lattice point] (\x,\y) circle;
			}
		}
		\draw (0,0) -- (2,0) -- (1,2) -- cycle;
		\foreach \pos in {(0,0), (1,0), (2,0), (1,1), (1,2)}{
			\fill \pos circle;
		}
		\node[below] at (1,0) {$P\mathstrut$};
	\end{tikzpicture}
	\hfil
	\begin{tikzpicture}[newton polytope]
		\foreach \x in {0,1,2,3}{
			\foreach \y in {0,1,2,3}{
				\fill[lattice point] (\x,\y) circle;
			}
		}
		\draw (0,0) -- (0,2);
		\foreach \y in {0,1,2}{
			\fill (0,\y) circle;
		}
		\node[below] at (0,0) {$\Rm_1(P)$};
	\end{tikzpicture}
	\hfil
	\begin{tikzpicture}[newton polytope]
		\foreach \x in {0,1,2,3}{
			\foreach \y in {0,1,2,3}{
				\fill[lattice point] (\x,\y) circle;
			}
		}
		\draw (0,0) -- (2,0) -- (1,2) -- (0,2) -- cycle;
		\foreach \x in {0,1,2}{
			\fill (\x,0) circle;
		}
		\foreach \x in {0,1}{
			\fill (\x,1) circle;
			\fill (\x,2) circle;
		}
		\node[below] at (1,0) {$\Qm_1(P) + [0,e_1]$};
	\end{tikzpicture}
	\caption{%
		Example of a polygon $P$ that is not 1-saturated, but is 2-saturated.
	}
	\label{fig:saturated_polygon:2}
\end{figure}

\begin{rem}
	\label{rem:saturation}
	The polytope $P$ is $i$-saturated if and only if either
        $\langle x,e_i \rangle = 0$, for all $x \in P \cap \Z^d$, or
        if one has $P = \conv((\Qm_i(P) + [0,e_i]) \cup \Rm_i(P))$.
        Here the inclusion
        $P \subseteq \conv((\Qm_i(P) + [0,e_i]) \cup \Rm_i(P))$ always
        holds.
\end{rem}



The polytopes $P_j$ in the following lemma are those arising from
lifting by division and consist of $\NP(f_j - f_j^u)$, the remainder
and several different polytopes arising from substitutions.  They are
contained in $\R^{d+k}$ where the first $d$ corresponds to the
original variables and $d+1,\dotsc,d+k$ to the lifting variables
$y_{1},\dotsc, y_{k}$ in Definition~\ref{d:lift}.  This lemma is the
polyhedral version of the long division by the linear terms
$x_{i}-\alpha_{i}$.
\begin{lem}
  \label{lem:technical_lemma_from_hell}
  Let $k \in [d-1]$. Set $u \coloneqq -e_d$ and let $f_1,\dots,f_d \in \C[x_1,\dots,x_d]$ be polynomials such that each $f_i$ contains a term not involving any of the variables $x_{k+1}, \ldots, x_d$. Assume that all $f_1^u,\dots,f_d^u$ are
  $(1, \dots, k)$-saturated. For $j \in [d]$ we let
  \begin{align*}
    P_j \coloneqq \conv \left(\NP(f_j - f_j^u) \cup \Rm_{(1,\dots,k)}(\NP(f_j^u)) \cup \bigcup_{i=0}^{k-1} (\Qm_{i+1}(\Rm_{(1,\dots,i)}(\NP(f_j^u)))+e_{d+1+i}) \right) \subset \R^{d+k},
  \end{align*}
  where $\Rm_{(1,\dots,0)}$ is the identity and all polytopes in
  $\R^{d}$ are embedded into the first $d$ coordiantes of $\R^{d+k}$.
  Then
  \begin{align*}
    \MV(P_1, \dots, P_d, [0,e_1], \dots, [0,e_k]) = \MV(N_1, \dots, N_d)
  \end{align*}
  with
  \begin{align*}
    N_j \coloneqq \conv(\pi(\NP(f_j)), 0, \pi(e_{d+l}): l \in [k], x_l \text{ occurs in } f_j^u) \subset \R^{d},
  \end{align*}
  where $\pi\colon \R^{d+k} \to \R^d$ is the projection forgetting
  the first $k$ coordinates.
\end{lem}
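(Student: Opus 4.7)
The plan is to proceed in two stages: first reduce the left-hand mixed volume from $\R^{d+k}$ to $\R^d$ via the well-known unit-segment identity, and then verify that the projected polytopes coincide with the $N_j$ by a polytope-geometric computation using the saturation hypothesis. For the first stage I would invoke the identity
\[
\MV_{d+k}(Q_1, \dots, Q_d, [0, e_1], \dots, [0, e_k]) = \MV_d(\pi(Q_1), \dots, \pi(Q_d)),
\]
valid for arbitrary lattice polytopes $Q_j \subset \R^{d+k}$, where $\pi\colon \R^{d+k} \to \R^d$ forgets the directions of the unit segments. This is a standard consequence of the multilinearity of the mixed volume together with the observation that a unit segment integrates out its own direction. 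After applying the identity to the $P_j$, it suffices to prove the polytope equality $\pi(P_j) = N_j$ for each $j$.

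For the second stage I would compute $\pi(P_j)$ by analyzing each of the three summands in its definition separately. The off-face contribution $\pi(\NP(f_j - f_j^u))$ lies in the slice $\{y_1 = \dots = y_k = 0\}$. By the saturation hypothesis, $\Rm_{(1,\dots,k)}(\NP(f_j^u))$ equals the face slice $\NP(f_j^u) \cap \{x_1 = \dots = x_k = 0\}$, and since $\pi$ forgets exactly these coordinates, its projection coincides with $\pi(\NP(f_j^u))$; combined with the off-face part, the convex hull yields $\pi(\NP(f_j))$ in the $y = 0$ slice. For each $i$, the shifted quotient piece $\Qm_{i+1}(\Rm_{(1,\dots,i)}(\NP(f_j^u))) + e_{d+1+i}$ lies at height $y_{i+1} = 1$; iterated saturation, applied level by level, collapses the projection of its base so that only the vertex $e_{d+1+i}$ survives the translation, and only when $x_{i+1}$ occurs in $f_j^u$ (otherwise the quotient is empty). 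The overall convex hull then reproduces $N_j$.

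The main obstacle will be justifying the collapse of the projected quotient pieces in the last step. This requires an induction on $i$ in which one uses at each step that the iterated remainder $\Rm_{(1,\dots,i)}(\NP(f_j^u))$ is itself $(i+1)$-saturated, ensuring that $\Qm_{i+1}$ does not introduce new extent in the coordinates that survive $\pi$. The combinatorial bookkeeping, coupled with the need to correctly identify which $e_{d+l}$ enter $N_j$ based on the appearance of $x_l$ in $f_j^u$, constitutes the delicate technical core of the lemma.
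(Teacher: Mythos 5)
Your first stage coincides with the paper's: both reduce the left-hand side to $\MV(\pi(P_1),\dots,\pi(P_d))$ via the projection formula for mixed volumes, and your identification $\Rm_{(1,\dots,k)}(\NP(f_j^u))=\pi(\NP(f_j^u))$ is also the one the paper uses. The gap is in your second stage: the claimed polytope identity $\pi(P_j)=N_j$ is false in general, because the shifted quotient pieces do not collapse under $\pi$. Saturation only says that $\NP(f_j^u)$ (resp.\ its iterated remainders) can be reconstructed from quotient and remainder; it says nothing about the quotient $\Qm_{i+1}(\Rm_{(1,\dots,i)}(\NP(f_j^u)))$ lying in the span of $e_1,\dots,e_k$. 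In general this quotient has positive extent in the coordinates $x_{k+1},\dots,x_{d-1}$, which $\pi$ retains, so the projection of $\Qm_{i+1}(\Rm_{(1,\dots,i)}(\NP(f_j^u)))+e_{d+1+i}$ is a translate of a nontrivial polytope, not the single vertex $e_{d+1+i}$. Concretely, take $d=3$, $k=1$, $f_1^u=1+x_1+x_2+x_1x_2$, which is $1$-saturated: then $\Qm_1(\NP(f_1^u))=[0,e_2]$, so $\pi(P_1)$ contains $e_2+e_4$, while the only point of $N_1$ at height $1$ in the new coordinate is $e_4$ itself; hence $\pi(P_1)\supsetneq N_1$. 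So the "collapse" step is not a matter of delicate bookkeeping, it is simply not available. At best, and only when the inclusions $N_j\subseteq\pi(P_j)$ hold (note that even $0\in\pi(P_j)$ requires $f_j^u$ to contain a monomial involving only $x_1,\dots,x_k$, which you would need to justify), monotonicity yields $\MV(\pi(P_1),\dots,\pi(P_d))\geq\MV(N_1,\dots,N_d)$, not the asserted equality.

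Note also that the paper itself does not argue via a polytope identity: after the projection formula it works at the level of vanishing of the mixed volume, comparing the slice $\pi(P_j)\cap\{\langle e_{k+1},\cdot\rangle=\cdots=\langle e_d,\cdot\rangle=0\}$ with $\conv(0,e_{d+l}\colon x_l \text{ occurs in } f_j^u)$, i.e.\ it compares essentiality of the two tuples rather than the polytopes themselves; and indeed only the positivity statement is what gets used later (in Theorem~\ref{thm:main}, via Theorem~\ref{thm:bihan-soprunov}). Your route needs the strictly stronger claim $\pi(P_j)=N_j$, which fails, so the proposal as written does not establish the lemma.
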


\begin{proof}
  By \cite[Theorem~5.3.1]{Schn14} we have
  \[
    \MV(P_1, \dots, P_d, [0,e_1], \dots, [0,e_k]) = \MV(\pi(P_1),\dots,\pi(P_d)).
  \]
  By construction of~$P_{j}$, one has
  \begin{align*}
    \pi(P_j) \cap \{\langle \pi(e_{k+1}), \cdot \rangle = \cdots = \langle \pi(e_d), \cdot \rangle = 0\} =
    \conv(0,\pi(e_{d+l}) \colon x_l \text{ occurs in } f_j^u),
  \end{align*}
  finishing the proof upon noting that
  $\Rm_{(1,\dots,k)}(\NP(f_j^u)) = \pi(\NP(f_j^u))$ (which also contains $0$) because
  $f_{1}^{u},\dotsc, f_{d}^{u}$ are $(1,\dotsc,k)$-saturated.
\end{proof}

\begin{rem}
  Assume the $f_{i}$ can be reordered so that $x_l$ occurs in $f_l^u$
  for all $l = 1, \ldots, k$.  Then if all $\NP(f_1),\dots,\NP(f_d)$
  are full-dimensional, it follows that $\MV(N_1, \dots, N_d) > 0$.
  This follows from the definition of the $N_{j}$,
  Definition~\ref{d:essentiality}(ii) and the fact that all
  $\pi(\NP(f_i))$ span $\R^{d-k} \times \{0\}^k$.  
\end{rem}

Our main theorem follows now.  It makes several special choices, some
of which can be assumed without loss of generality.  A discussion
follows the proof in Remark~\ref{r:wlog}.  For example, we only
consider facial subsystems in direction of the last coordinate.
\begin{thm}\label{thm:main}
  Let $k \in [d-1]$ and $u \coloneqq -e_d$. Assume
  $f_1,\dots,f_d \in \C[x_1,\dots,x_d]$ satisfy
  \begin{enumerate}
  \item $f_{i}$ contains a term not involving any of the variables $x_{k+1}, \ldots, x_{d}$ for all $i \in [d]$,
  \item $f_1^u,\dots,f_d^u$ are saturated,
  \item $\MV(N_1, \dots, N_d) > 0$, where the $N_j$ are as in
    Lemma~\ref{lem:technical_lemma_from_hell} and
  \item the system $f_1^u = \dots = f_d^u = 0$ has a
    solution in $(\C^*)^k \times \{0\}^{d-1-k}$ for some
    $k \in [d-1]$.
  \end{enumerate}
  Then there exists a lifting of $(f_1,\dots,f_d)$ to
  $\tilde{f}_1,\dots,\tilde{f}_d, h_1,\dotsc, h_k \in
  \C[x_1,\dots,x_d, y_1, \dots, y_k]$, explicitly given in the proof,
  satisfying
\begin{align*}
\MV(\NP(\tilde{f}_1),\dots,\NP(\tilde{f}_d),\NP(h_1),\dots,\NP(h_k)) < \MV(\NP(f_1),\dots,\NP(f_d)).
\end{align*}
\end{thm}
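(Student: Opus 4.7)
The plan is to construct the lifting explicitly via iterated polynomial division, verify it is solution-preserving by substitution, and then chain mixed volume estimates using Lemmas~\ref{lem:sparse_division} and~\ref{lem:technical_lemma_from_hell} combined with the Bihan--Soprunov theorem~\ref{thm:bihan-soprunov}.

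First I construct the lifting. Let $(\alpha_1,\ldots,\alpha_k,0,\ldots,0)\in(\C^\ast)^k\times\{0\}^{d-1-k}$ be a common zero of $f_1^u,\ldots,f_d^u$ furnished by~(iv). For each $i\in[d]$, perform $k$ successive polynomial divisions of $f_i^u$ by $(x_1-\alpha_1),\ldots,(x_k-\alpha_k)$, producing
\[
f_i^u=\sum_{l=1}^{k}q_i^{(l)}(x_l-\alpha_l)+r_i,
\]
with $r_i\in\C[x_{k+1},\ldots,x_{d-1}]$ and $r_i(0,\ldots,0)=0$. Define
\[
\tilde f_i\coloneqq(f_i-f_i^u)+\sum_{l=1}^{k}q_i^{(l)}y_l+r_i,\qquad h_l\coloneqq y_l-x_l+\alpha_l.
\]
Substituting $y_l=x_l-\alpha_l$ (from $h_l=0$) into $\tilde f_i$ recovers $f_i$, so solutions of $F$ in $(\C^\ast)^d$ correspond bijectively to solutions of $\tilde F$ in $(\C^\ast)^d\times\C^k$; this verifies the lifting property of Definition~\ref{d:lift}.

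For the mixed volume decrease, I chain
\[
\MV\bigl(\NP(\tilde f_1),\ldots,\NP(\tilde f_d),\NP(h_1),\ldots,\NP(h_k)\bigr)\le\MV(N_1,\ldots,N_d)<\MV\bigl(\NP(f_1),\ldots,\NP(f_d)\bigr).
\]
Iterated application of Lemma~\ref{lem:sparse_division} using saturation~(ii) yields $\NP(\tilde f_i)\subseteq P_i$, where $P_i$ is as in Lemma~\ref{lem:technical_lemma_from_hell}. The structural features of $\tilde f_i$---it is affine in $y$ with at most one $y$-variable per monomial, and has no constant term (since $r_i(0)=0$ and $f_i-f_i^u$ is supported in $\{x_d\ge1\}$ by~(i))---let me upgrade this to the first inequality: the extra apex $e_{d+l}$ distinguishing $\NP(h_l)=\conv(0,e_l,e_{d+l})$ from the segment $[0,e_l]$ cannot combine with the $\NP(\tilde f_i)$'s to produce an essential tuple, since the directions $u$ one would need require $x_d$-contributions that the $f_i^u$'s cannot supply. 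Thus the mixed volume drops to $\MV(P_1,\ldots,P_d,[0,e_1],\ldots,[0,e_k])=\MV(N_1,\ldots,N_d)$ via Lemma~\ref{lem:technical_lemma_from_hell}. The strict second inequality follows from Bihan--Soprunov in $\R^d$: after a monomial coordinate permutation identifying the $y_l$-direction in the ambient of $N_j$ with the $x_l$-direction, $N_j$ embeds as a subpolytope of $\NP(f_j)$, and one finds a direction $u$ (witnessing a face of $\NP(f_j)$ strictly beyond $N_j$) for which the touching tuple is essential, using positivity~(iii).

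The main obstacle is the first mixed volume inequality. A naive monotonicity bound via $\NP(\tilde f_i)\subseteq P_i$ is too lossy: in the motivating example $\MV(P_1,P_2,\NP(h_1))=2$ strictly exceeds $\MV(N_1,N_2)=1$, with the gap witnessed in a direction where the origin of $P_i$ meets the apex $e_{d+l}$ of $\NP(h_l)$. The crucial fact that $\NP(\tilde f_i)$ misses the origin neutralises precisely this obstruction, but turning this observation into a clean proof in arbitrary dimension requires a delicate Bihan--Soprunov bookkeeping over all candidate directions $u$, supported by saturation~(ii) and positivity~(iii) to rule out the remaining essential configurations.
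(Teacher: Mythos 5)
Your construction of the lifting is exactly the paper's, and your observation that $r_i$ vanishes at the facial root so that the $\tilde f_i$ have no constant term is indeed the decisive fact. However, the mixed-volume argument has a genuine gap, and its central step is not merely unproven (as you concede) but false: you claim the lifted mixed volume drops all the way to $\MV(N_1,\dots,N_d)$, i.e.\ $\MV(\NP(\tilde f_1),\dots,\NP(\tilde f_d),\NP(h_1),\dots,\NP(h_k))\le\MV(N_1,\dots,N_d)$. Lemma~\ref{lem:technical_lemma_from_hell} does not say this; it only identifies $\MV(P_1,\dots,P_d,[0,e_1],\dots,[0,e_k])$ with $\MV(N_1,\dots,N_d)$, and in the proof this quantity serves purely as an essentiality certificate. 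Concretely, take $d=2$, $k=1$, $f_1=(1-x_1^2)+2x_2+x_1^5x_2$, $f_2=(1-x_1)^2+3x_2+x_1^5x_2$: all hypotheses hold (the facial polynomials $1-x_1^2$ and $(1-x_1)^2$ have the common root $x_1=1$ and are saturated, and $N_1=N_2=\conv(0,e_2,e_3)$, so $\MV(N_1,N_2)=1$), while $\MV(\NP(f_1),\NP(f_2))=7$ and the lifted system $\tilde f_1=-y(x_1+1)+2x_2+x_1^5x_2$, $\tilde f_2=y(x_1-1)+3x_2+x_1^5x_2$, $h=y-(x_1-1)$ has mixed volume $6$: a generic system on these supports has six torus solutions, as one sees by eliminating $y$ via $h$ and then $x_2$, leaving a univariate polynomial of degree six. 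So your first inequality would assert $6\le 1$. Your second inequality $\MV(N_1,\dots,N_d)<\MV(\NP(f_1),\dots,\NP(f_d))$ is also not justified as stated ($N_j$ need not embed into $\NP(f_j)$ after a coordinate permutation, since neither the origin nor the projection forgetting $x_1,\dots,x_k$ need lie in $\NP(f_j)$), but that is secondary.

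The paper's route differs precisely where you struggle, and avoids any quantitative comparison with $\MV(N_1,\dots,N_d)$. First it proves the \emph{equality} $\MV(P_1,\dots,P_d,\Delta_1,\dots,\Delta_k)=\MV(\NP(f_1),\dots,\NP(f_d))$, with $\Delta_j=\NP(h_j)=\conv(0,e_j,e_{d+j})$, not by polyhedral estimates but via Proposition~\ref{prop:trivial_lift}: the lifting map $\phi_{(f_1,\dots,f_d)}$ is solution-preserving into $\C[P_1,\dots,P_d,\Delta_1,\dots,\Delta_k]$, and the resubstitution $\tilde\phi_{(f_1,\dots,f_d)}$ (solving $h_j$ for $y_j$ and substituting) is solution-preserving back on a Zariski-open dense set; saturation (ii) is used exactly here, to guarantee that $\tilde\phi$ lands in $\C[\NP(f_1),\dots,\NP(f_d)]$ --- a role your write-up does not assign to it. Second, strictness is a single application of Theorem~\ref{thm:bihan-soprunov} with the one explicit direction $v=-(e_{k+1}+\dots+e_{d+k})$: because the $\tilde f_i$ miss the origin one gets $T_v=\{d+1,\dots,d+k\}$, and the modified tuple $(P_1,\dots,P_d,\Delta_1^v,\dots,\Delta_k^v)=(P_1,\dots,P_d,[0,e_1],\dots,[0,e_k])$ is essential by Lemma~\ref{lem:technical_lemma_from_hell} together with hypothesis (iii). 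No ``bookkeeping over all candidate directions'' is needed: Bihan--Soprunov requires exhibiting only one direction whose modified tuple is essential, and it certifies a drop of at least one, which is all the theorem claims. To repair your proof, replace your chain by these two steps; the pieces you already have (the explicit lifting, $\NP(\tilde f_i)\subseteq P_i$ from Lemma~\ref{lem:sparse_division}, and the absence of a constant term in $\tilde f_i$) then slot directly into the paper's argument.
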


\begin{proof}
The polynomials $f_1^u,\dots,f_d^u$ are a system in the variables $x_1,\dots,x_{d-1}$
and have a common root $\alpha \in (\C^*)^k \times \{0\}^{d-1-k}$. We define a map
$\phi_{(f_1,\dots,f_d)}$ that sends any tuple
$(g_1,\dots,g_d) \in \C[\NP(f_1),\dots,\NP(f_d)]$ to the tuple
\begin{align*}
\tilde{g}_1(x_1,\dots,x_d,y_1,\dots,y_k) &= y_1 q^1_1 + \dots + y_k q^k_1 + r_1 + (g_1 - g_1^u), \\
&\shortvdotswithin{=}
\tilde{g}_d(x_1,\dots,x_d,y_1,\dots,y_k) &= y_1 q^1_d + \dots + y_k q^k_d + r_d + (g_d - g_d^u), \\
h_1(x_1,\dots,x_d,y_1,\dots,y_k) &= y_1 - (x_1 - \alpha_1), \\
&\shortvdotswithin{=}
h_{k}(x_1,\dots,x_d,y_1,\dots,y_k) &= y_k - (x_k - \alpha_k).
\end{align*}
where $q^1_i,\dots,q^k_i$ are the quotients from polynomial division of $g_i^u$ by $(x_1 - \alpha_1),\dots,(x_k - \alpha_k)$ in this order (and with
respect to an arbitrary monomial order). The polynomial $r_i$ is the remainder and therefore a polynomial only in the variables $x_{k+1}, \dots, x_{d-1}$.
It is straightforward to see that $\phi_{(f_1,\dots,f_d)}$ maps a system to a lifting
in the sense of Definition~\ref{d:lift}.
For $i \in [d]$ define $P_i$ as in Lemma~\ref{lem:technical_lemma_from_hell}
and for $j \in [k]$ let
\begin{align*}
	\Delta_j \coloneqq \conv(0, e_j, e_{d+j}) = \NP(h_j).
\end{align*}
By Lemma~\ref{lem:sparse_division}, the image of
$\phi_{(f_1,\dots,f_d)}$ lies in the vector space
$\C[P_1,\dots,P_d,\Delta_1,\dots,\Delta_k]$.  Let
$U \subset \C[P_1,\dots,P_d,\Delta_1,\dots,\Delta_k]$ be the
Zariski-open subset for which the coefficients of all $y_i$ in $h_i$
are non-zero.  Consider the map which solves the $h_{i}$, giving
$y_{i} = x_{i} - \alpha_{i}$, and substitutes this in the first $d$
equations. Since the facial subsystem $f_1^u,\dots,f_d^u$ is
saturated, this is a well-defined map
$\tilde{\phi}_{(f_1,\dots,f_d)} \colon U \rightarrow
\C[\NP(f_1),\dots,\NP(f_d)]$.  It is also solution-preserving.
Proposition~\ref{prop:trivial_lift} then yields the equality
$\MV(P_1,\dots,P_d,\Delta_1,\dots,\Delta_k) = \MV(\NP(f_1),\dots,\NP(f_d))$.

It is left to show that the Newton polytopes of the system
\begin{align*}
  \phi_{(f_1,\dots,f_d)}(f_1,\dots,f_d) =: (\tilde{f}_1,\dots,\tilde{f}_d,h_1,\dots,h_k)
\end{align*}
have strictly lower mixed volume than
$(P_1,\dots,P_d,\Delta_1,\dots,\Delta_k)$.  Set
$v \coloneqq - (e_{k+1} + \dots + e_d + \dots + e_{d+k}) \in \R^{d+k}$.  Since
$\alpha$ is a solution of $(f_1^u,\dots,f_d^u)$, we have
$r_{1}(\alpha) = \dotsb = r_{d}(\alpha) = 0$.  Hence, the polynomials
$\tilde{f}_1,\dots,\tilde{f}_d$ do not have a constant term and their
Newton polytopes do not contain the origin.  Therefore, in the
notation of Theorem~\ref{thm:bihan-soprunov}, one has
$T_v = \{d+1,\dots,d+k\}$. Moreover, $\Delta_j^v = [0,e_j]$ for
$j \in [k]$, and so the tuple
$(P_1,\dots,P_d,\Delta_1^v,\dots,\Delta_k^v)$ is essential by
Lemma~\ref{lem:technical_lemma_from_hell}.  This implies
$\MV(\NP(\tilde{f}_1),\dots,\NP(\tilde{f}_d),\NP(h_1),\dots,\NP(h_k))
< \MV(\NP(f_1),\dots,\NP(f_d))$ by Theorem~\ref{thm:bihan-soprunov}.
\end{proof}

\begin{rem}
  The assumption of saturatedness of the lifted facial subsystem is
  necessary, as otherwise the image of
  $\tilde{\phi}_{(f_1,\dots,f_d)}$ generically does not lie in
  $\C[\NP(f_1),\dots,\NP(f_d)]$.
\end{rem}
\begin{rem}
  Analyzing the constraints on the coefficients imposed in the
  polynomial division, it can be seen that the lifting in
  Theorem~\ref{thm:main} decreases the mixed volume by exactly one.
  However, this theorem should not be the final word.
  Theorem~\ref{thm:bivariate_lifting} contains a quantitative version
  in the bivariate case and lifting with more general polynomials
  could be more efficient.
\end{rem}

\begin{rem}\label{r:wlog}
  Several assumptions of Theorem~\ref{thm:main} seem restrictive but
  do not sacrifice generality.  Transforming a system using monomial
  changes of variables allows to apply Theorem~\ref{thm:main} for a
  considerably larger class of systems.  Whenever any facial subsystem
  has a solution, one can transform the system so that this subsystem
  is obtained in direction $u = -e_d$ and contains a constant term. Moreover, this transformation can be chosen so that the entire system consists of polynomials with non-negative exponents.
  Furthermore, saturatedness of the Newton polytopes can sometimes be attained by applying further
  transformations.
  It is an interesting open problem to give an explicit
  characterization of tuples of lattice polytopes that can be
  transformed to a saturated tuple using affine unimodular
  transformations.
\end{rem}

\begin{rem}
\label{rem:caveat}
Definition~\ref{d:lift} allows lifted solutions to lie in
$(\C^*)^d \times \C^k$. In particular, certain torus-solutions of the
original system may correspond to non-torus solutions of the lifting.
This can cause problems in applying the lifting strategy of
Theorem~\ref{thm:main} to solve a concrete non-generic system.  Assume
that, in the setting of the above theorem, the facial subsystem in
direction $u$ has a solution at
$x_1 = \alpha_1, \dots, x_k = \alpha_k$ and there exists a torus
solution $s$ of the full system with $s_i = \alpha_i$ for some
$i \in [k]$. Then $s$ corresponds to a non-torus solution in the
lifting because one of the $y_{i}$ vanishes.  However, these non-torus
solutions are not polyhedral in the sense that a small perturbation of
coefficients turns them into torus solutions.
\end{rem}

\begin{rem}\label{rem:LiftOne}
  One drawback of the lifting in Theorem~\ref{thm:main} is that it adds $k$ polynomials for a mixed
  volume reduction of one.
  In general, it would be interesting to explore the liftings of a given system that only add a single polynomial.
  A first insight is that there is no point in substituting a monomial. Indeed, consider a lifting of the form
  \begin{align*}
    \tilde{g}_1(x_1, \ldots, x_d, y) &= y q_1 + r_1 \\
                                     &\shortvdotswithin{=}
                                       \tilde{g}_d(x_1, \ldots, x_d, y) &= y q_d + r_d \\
    h(x_1, \ldots, x_d, y) &= y - x_1^{a_1}\cdots x_d^{a_d}
  \end{align*}
  where $q_i$ and $r_i$ are the quotients and remainders from
  polynomial division of $g_i$ by $h$, i.e.\ replacing
  $x_1^{a_1}\cdots x_d^{a_d}$ by $y$ where possible.  By the
  projection formula for mixed volumes (see
  e.g.~\cite[Theorem~5.3.1]{Schn14}) such a lifting does
  not alter the mixed volume.  Theorem~\ref{thm:main} substitutes binomials,
  which struck us as the next logical step.  Buying into some
  complications on the polyhedral side, more complicated liftings
  seem feasible.
  For example, it would be interesting to investigate liftings in which the new polynomial is
  of the form $p_1y + p_2$ with $p_1,p_2 \in \C[x_1,\dots,x_d]$ where $p_1$ is not constant.
\end{rem}

\section{Lifting using dependency in the facial subsystem}

In this section we present a different type of lifting that does not
assume the facial subsystem to be saturated.  It does not use
polynomial division as in Theorem~\ref{thm:main}, but
exploits dependencies in the facial subsystem.  The lifting replaces two linearly dependent facial
polynomials by a new variable.  For simplicity, we formulate it for
$f^{u}_{1} = \lambda f^{u}_{2}$ but more complicated dependencies of
the $f_{i}^{u}$ can also be exploited under more restrictive conditions. See Remark~\ref{r:lindep}.

\begin{thm}\label{thm:linearly_dependent}
  Let $f_1,\dots,f_d \in \C[x_1^{\pm 1},\dots,x_d^{\pm 1}]$ and
  $u \in \R^d \setminus \{0\}$ such that
  furthermore:
\begin{enumerate}
\item $f_1^u = \lambda f_2^u$,
\item \label{en:ess_ass} $\MV(\NP(f_2^u),\NP(f_3^u),\dots,\NP(f_d^u)) > 0$,
\item \label{en:notfacial} $f_1 \neq f_1^u$.
\end{enumerate}
Then there exists a lift of the system with polynomials $\tilde{f}_1,\tilde{f}_2,f_3,\dots,f_d,h \in \C[x_1^{\pm 1},\dots,x_{d+1}^{\pm 1}]$, explicitly given in the proof, satisfying
\begin{equation*}
	\MV(\NP(\tilde{f}_1),\NP(\tilde{f}_2),\NP(f_3),\dots,\NP(f_d),\NP(h)) < \MV(\NP(f_1),\dots,\NP(f_d)).
\end{equation*}
\end{thm}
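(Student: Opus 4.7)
The plan is to introduce a new variable $y = x_{d+1}$ which will play the role of $f_2^u$. Concretely, set
\begin{align*}
	\tilde f_1 &\coloneqq \lambda y + (f_1 - f_1^u), \\
	\tilde f_2 &\coloneqq y + (f_2 - f_2^u), \\
	h &\coloneqq y - f_2^u,
\end{align*}
and leave $f_3, \dots, f_d$ unchanged. Substituting $y = f_2^u(x)$ into $\tilde f_1, \tilde f_2$ recovers $f_1, f_2$ by assumption~(i), and $h = 0$ uniquely determines $y$ from $x \in (\C^*)^d$; hence this is a valid lift in the sense of Definition~\ref{d:lift}. Note that $\lambda \neq 0$ and $\NP(f_1^u) = \NP(f_2^u)$ throughout.

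To compare mixed volumes, I would introduce the ambient polytopes in $\R^{d+1}$
\[
	P_i \coloneqq \conv(\{e_{d+1}\} \cup (\NP(f_i) \times \{0\})) \;\;(i = 1, 2),\quad P_i \coloneqq \NP(f_i) \times \{0\} \;\;(i \geq 3),\quad Q \coloneqq \NP(h).
\]
Evidently $\NP(\tilde f_i) \subseteq P_i$ for all $i$, strictly for $i = 1, 2$ because $\NP(f_1^u)$ is a proper face of $\NP(f_1)$ by assumption~(iii). Next, I would apply Proposition~\ref{prop:trivial_lift} to show the equality $\MV(P_1, \dots, P_d, Q) = \MV(\NP(f_1), \dots, \NP(f_d))$: the forward map $(g_1, \dots, g_d) \mapsto (g_1, g_2, \dots, g_d, y - f_2^u)$ is a trivial solution-preserving lift, and on the Zariski-open subset where the coefficient of $y$ in the last polynomial is nonzero, the backward map solves for $y$ and substitutes into the first $d$ polynomials, landing in $\C[\NP(f_1), \dots, \NP(f_d)]$ since $\NP(f_2^u) = \NP(f_1^u) \subseteq \NP(f_1) \cap \NP(f_2)$ by assumption~(i).

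The remaining task is to establish the strict inequality $\MV(\NP(\tilde f_1), \NP(\tilde f_2), \NP(f_3), \dots, \NP(f_d), \NP(h)) < \MV(P_1, \dots, P_d, Q)$ via Theorem~\ref{thm:bihan-soprunov}. I would choose $v = (u, -M) \in \R^{d+1}$ with $M$ large, yielding $P_i^v = \NP(f_i^u) \times \{0\}$ for all $i$ and $Q^v = \NP(f_2^u) \times \{0\}$. For $i = 1, 2$, the polytope $\NP(\tilde f_i) = \conv(\{e_{d+1}\} \cup (\NP(f_i - f_i^u) \times \{0\}))$ misses $P_i^v$: its base is disjoint from the strict $u$-face $\NP(f_i^u)$, and the apex $e_{d+1}$ sits at height $1$. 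Hence $T_v = \{3, \dots, d+1\}$, and the essentiality tuple becomes
\[
	\{P_1,\ P_2,\ \NP(f_3^u) \times \{0\},\ \dots,\ \NP(f_d^u) \times \{0\},\ \NP(f_2^u) \times \{0\}\} \subset \R^{d+1}.
\]

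Essentiality is verified via condition~(iv) of Definition~\ref{d:essentiality}. For subsets $I$ avoiding $\{1,2\}$ the dimension bound is immediate from assumption~(ii) applied to the $(d-1)$-tuple $(\NP(f_2^u), \NP(f_3^u), \dots, \NP(f_d^u))$ living in the hyperplane orthogonal to $u$ in $\R^d \times \{0\}$. Including exactly one of $P_1, P_2$ contributes the new direction $e_{d+1}$, boosting the dimension by one. The main technical point is when both $P_1$ and $P_2$ are in $I$: they share the apex $e_{d+1}$, so one of them must supply a second, independent direction. Here assumption~(iii) is essential, since it forces $\lspan(\NP(f_1) - \NP(f_1))$ to contain a vector outside $u^\perp$, producing the required extra $u$-direction. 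Once this dimension bookkeeping is carried out, Theorem~\ref{thm:bihan-soprunov} yields the strict inequality and, combined with the equality from the previous paragraph, proves the claim.
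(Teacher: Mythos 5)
Your proposal is correct and follows essentially the same route as the paper: the same substitution lift (you set $y=f_2^u$ where the paper sets $y=f_1^u$, which is an immaterial swap), the same ambient pyramids $P_1,P_2$ and $\Delta=Q$, Proposition~\ref{prop:trivial_lift} for the equality of mixed volumes, and Theorem~\ref{thm:bihan-soprunov} with the direction $v=(u,-M)$ (the paper normalizes to $u=-e_d$ and takes $v=-e_d-e_{d+1}$), yielding the same touching set $T_v=\{3,\dots,d+1\}$. The only cosmetic difference is that you verify essentiality through the dimension condition (iv) of Definition~\ref{d:essentiality} while the paper exhibits $d+1$ linearly independent segments via condition (ii), using exactly the same inputs (assumption (ii) in $u^\perp$, the $u$-direction from assumption (iii) in $P_1$, and the $e_{d+1}$-direction).
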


\begin{proof}
  Without loss of generality, potentially after a monomial change of
  variables, we assume $u = -e_d$ and
  $\max_{p \in \NP(f_i)}\left<u,p\right>=0$ for all $i \in [d]$.  Then
  the polynomials $f_1^u,\dots,f_d^u$ are a system in the variables
  $x_1,\dots,x_{d-1}$.  Let $\phi_{(f_1,\dots,f_d)}$ send any
  system $(g_1,\dots,g_d) \in \C[\NP(f_1),\dots,\NP(f_d)]$ to the
  system $(\tilde{g}_1,\tilde{g}_2,g_3,\dots,g_d,h)$, where:
\begin{align*}
\tilde{g}_1(x_1,\dots,x_d,y) &= y + g_1^u -f_1^u + (g_1 - g_1^u), \\
\tilde{g}_2(x_1,\dots,x_d,y) &= \lambda y + g_2^u - f_2^u + (g_2 - g_2^u), \\
h(x_1,\dots,x_d,y) &= y - f_1^u.
\end{align*}
One verifies $\phi_{(f_1,\dots,f_d)} (g_{1},\dotsc, g_{d})$ is a lift
in the sense of Definition~\ref{d:lift}.
For $i = 1,2$, let
\begin{align*}
P_i &\coloneqq \conv \left( \{e_{d+1}\} \cup \left(\NP(f_i) \times \{0\} \right) \right),
\end{align*}
and furthermore
\begin{align*}
\Delta &= \conv \left( \{e_{d+1}\} \cup \left( \NP(f_1^u)
\times \{0\} \right) \right).
\end{align*}
Then the image of $\phi_{(f_1,\dots,f_d)}$ lies in the vector space
$\C[P_1,P_2,\NP(f_3),\dots,\NP(f_d),\Delta]$.  Let
$U \subset \C[P_1,P_2,\NP(f_3),\dots,\NP(f_d),\Delta]$ be the
Zariski-open subset of systems for which the coefficient of $y$ in $h$
does not vanish.  Then the map
$\tilde{\phi}_{(f_1,\dots,f_d)} \colon U \to
\C[\NP(f_1),\dots,\NP(f_d)]$, solving $h = 0$ and substituting $y$
into the first two polynomials is solution preserving.
Proposition~\ref{prop:trivial_lift} therefore implies
$\MV(P_1,P_2,\NP(f_3),\dots,\NP(f_d),\Delta)=
\MV(\NP(f_1),\dots,\NP(f_d))$.

It is left to show that the Newton polytopes of the system
\begin{align*}
\phi_{(f_1,\dots,f_d)}(f_1,\dots,f_d)=(\tilde{f}_1,\tilde{f}_2,f_3,\dots,f_d,h),
\end{align*}
have strictly lower mixed volume than
$(P_1,P_2,\NP(f_3), \dots,\NP(f_d),\Delta)$. Set
$v = - e_d - e_{d+1}$.  By construction, $\NP(\tilde{f}_1)$ and
$\NP(\tilde{f}_2)$ only contain monomials in which $x_d$ occurs.  All
other monomials are subtracted in the construction of the lifting.
Then, in the notation of Theorem~\ref{thm:bihan-soprunov}, one has
$T_v = \{ 3,\dots,d+1 \}$.  Moreover one has
\begin{align*}
(P_1,P_2,\NP(f_3)^v,\dots,\NP(f_d)^v,\Delta^v)
= (P_1,P_2,\NP(f_3^u),\dots,\NP(f_d^u),\NP(f_2^u)),
\end{align*}
which is essential.  To see this we use assumption \eqref{en:ess_ass}
to produce $d-1$ linearly independent segments in
$\NP(f_3^u),\dots,\NP(f_d^u),\NP(f_2^u)$.  Additionally there are one
segment in $P_1$ between a point in $\NP(f_1^u)$ and a point in
$\NP(f_1)$ (by \eqref{en:notfacial}), and a segment in $P_2$ between a
point in $\NP(f_2^u)$ and the point~$e_{d+1}$.  The whole collection
of segments is linearly independent.  Theorem~\ref{thm:bihan-soprunov} implies
\begin{equation*}
\MV(\NP(\tilde{f}_1),\NP(\tilde{f}_2),\NP(f_3),\dots,\NP(f_d),\Delta)
< \MV(P_1,P_2,\NP(f_3),\dots,\NP(f_d),\Delta). \qedhere
\end{equation*}
\end{proof}

\begin{rem}\label{r:lindep}
  Theorem~\ref{thm:linearly_dependent} can be generalized to other
  linear dependencies among the $f_{i}^{u}$.  If for some $i \in [d]$
  one has $f_{i}^{u} = \sum_{j \neq i}\lambda_{j}f_{j}^{u}$ and
  $\NP(f_{i}^{u}) \supset \NP(f_{j}^{u})$ for all $j\neq i$, then the
  resubstitution using $\tilde\phi$ has the correct codomain.  Making
  the formulation more technical, also more general linear dependencies
  could be used for lifting.
\end{rem}

\begin{ex}
	Consider the following polynomial system in three variables:
	\begin{align*}
	f_1(x_1, x_2, x_3) &= \underbracket{1 + x_1^2 x_2^2 + x_1^2 x_2^4} + x_3^2 + x_1 x_3 + x_2 x_3, \\
	f_2(x_1, x_2, x_3) &= \underbracket{1 + x_1^2 x_2^2 + x_1^2 x_2^4} + 2x_3^2 + x_1 x_3 + x_2 x_3, \\
	f_3(x_1, x_2, x_3) &= \underbracket{2 + x_1 x_2 + x_1^2 x_2^2 + x_1^2 x_2^4} + x_3^2 + x_1 x_3 + x_2 x_3.
	\end{align*}
	The mixed volume of the Newton polytopes of the $f_i$ is
        $16$. Denote by $f_i^u(x_1, x_2)$ the bracketed subsystems
        which are the facial subsystems with respect to
        $u = (0,0,-1)$.  They have the common solutions
        $(\pm \frac{i}{\sqrt{2}}, \pm i \sqrt{2})$.  It also holds
        that $f_1^u = f_2^u$, so the first condition of
        Theorem~\ref{thm:linearly_dependent} is satisfied. The second
        condition is satisfied as well because the three Newton
        polytopes of the $f_i^u$ agree, so the mixed volume is just
        the lattice volume of some full-dimensional polytope, hence
        positive. The substitution
        $y = f_1^u(x_1, x_2) (= f_2^u(x_1,x_2))$ gives the following
        lifted system in $4$ variables:
	\begin{align*}
	\tilde{f}_1(x_1, x_2, x_3, y) &= y + x_3^2 + x_1 x_3 + x_2 x_3, \\
	\tilde{f}_2(x_1, x_2, x_3, y) &= y + 2x_3^2 + x_1 x_3 + x_2 x_3, \\
	\tilde{f}_3(x_1, x_2, x_3, y) &= f_3(x_1, x_2, x_3), \\
	h(x_1, x_2, y) &= y - (1 + x_1^2 x_2^2 + x_1^2 x_2^4).
	\end{align*}
	This system has mixed volume $12 < 16$. Still, this is not
        best possible as the number of torus solutions is smaller than
        $12$.
\end{ex}

\section{Liftings for bivariate Systems}

In this section we generalize the lifting construction from
Theorem~\ref{thm:main} for bivariate systems, so that it yields a
mixed volume reduction of more than one.  In the bivariate case the
facial subsystems are univariate and thus saturatedness is not an
issue.  For the proof of the following theorem we need another formula
for the mixed volume which can be found, e.g., in \cite{Schn14} and
\cite[Remark~2.3]{BS19}.  We only state it in the case of three
lattice polytopes $P_1, P_2, P_3 \subseteq \R^3$. For any
$v \in \R^3 \setminus \{0\}$ we denote by $\Ht_{P_1}(v)$ the maximum
of $\langle v, p \rangle$ for all $p \in P_1$. Then
\begin{equation}\label{eq:MV_support_functions}
\MV(P_1, P_2, P_3) = \sum_{\substack{v \in \Z^3 \setminus \{0\}\\ \text{primitive}}} \Ht_{P_1}(v) \MV_{v^\perp}(P_2^v, P_3^v),
\end{equation}
where $\MV_{v^\perp}$ denotes the mixed volume with respect to
the linear subspace $v^\perp$ with its induced lattice, and $P_2^v$
and $P_3^v$ denote the faces of $P_2$ and $P_3$ maximizing $v$, viewed
(after a lattice translation) as subsets of~$v^\perp$.  In contrast to
the proofs of Theorems~\ref{thm:main}
and~\ref{thm:linearly_dependent}, the proof of the following theorem
does not rely on the strict monotonicity criterion of~\cite{BS19}.

%

\begin{thm} \label{thm:bivariate_lifting} Let
  $f_1,f_2 \in \C[x_1,x_2]$ and $u \coloneqq -e_2$.  Assume that
  $f_1^u$ and $f_2^u$ have a non-zero constant term and that
  $m = \deg\gcd(f_1^u, f_2^u) > 0$.  Then there exists a lift
  $\tilde{f}_1,\tilde{f}_2,g \in \C[x_1,x_2,y]$, explicitly given in
  the proof, satisfying
  \begin{align*}
    \MV(\NP(\tilde{f}_1),\NP(\tilde{f}_2),\NP(g)) \leq \MV(\NP(f_1),\NP(f_2)) - m.
  \end{align*}
\end{thm}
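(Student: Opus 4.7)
My plan is to define the lift via polynomial division by $p := \gcd(f_1^u, f_2^u)$ and then estimate the mixed volume of the lifted system using formula~\eqref{eq:MV_support_functions}. Since $f_i^u$ has non-zero constant and leading coefficients, so does $p$; writing $f_i^u = p\, q_i$ with $q_i \in \C[x_1]$ of degree $d_i - m$ (where $d_i := \deg f_i^u$), the lift is
\begin{align*}
\tilde{f}_i := y q_i + (f_i - f_i^u), \quad g := y - p.
\end{align*}
Substituting $y = p(x_1)$ into $\tilde{f}_i$ returns $p q_i + (f_i - f_i^u) = f_i$, so this is a lifting by Definition~\ref{d:lift}.

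In $\R^3$, with $e_3$ denoting the $y$-direction, one identifies $\NP(g) = Q := \conv(0, m e_1, e_3)$ (a triangle whose base edge has lattice length $m$) and $\NP(\tilde{f}_i) = \tilde{P}_i := \conv(\NP(f_i - f_i^u) \cup (\NP(q_i) + e_3))$, where $\NP(q_i) = [0, (d_i - m) e_1]$ is the shortened bottom segment. The central step applies formula~\eqref{eq:MV_support_functions} with $Q$ as the first polytope:
\begin{align*}
\MV(\tilde{P}_1, \tilde{P}_2, Q) = \sum_{v} \Ht_Q(v)\, \MV_{v^\perp}(\tilde{P}_1^v, \tilde{P}_2^v),
\end{align*}
summed over primitive $v \in \Z^3 \setminus \{0\}$. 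Since $\Ht_Q(v) = \max(0, m v_1, v_3)$, only directions with $v_1 > 0$ or $v_3 > 0$ contribute, and positivity of $\MV_{v^\perp}(\tilde{P}_1^v, \tilde{P}_2^v)$ further requires that $\tilde{P}_1^v, \tilde{P}_2^v$ together span the plane $v^\perp$.

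I would partition the contributing $v$'s into families according to which face of $Q$ is maximized (the vertex $m e_1$, the vertex $e_3$, or the hypotenuse $[m e_1, e_3]$), and evaluate each family's contribution using the Cayley-like structure of $\tilde{P}_i$ with bottom slice $\NP(f_i - f_i^u)$ at $c = 0$ and top slice $\NP(q_i) + e_3$ at $c = 1$. The aim is to match these against the two-dimensional expansion
\begin{align*}
\MV(\NP(f_1), \NP(f_2)) = \sum_{v'} \Ht_{\NP(f_1)}(v')\, \ell(\NP(f_2)^{v'})
\end{align*}
over primitive $v' \in \Z^2$, and verify that the sum of 3D contributions falls short of the 2D sum by exactly $m$.

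The main obstacle is the precise bookkeeping of this deficit. The "slanted" facets of $\tilde{P}_i$, arising as convex hulls of edges of $\NP(f_i - f_i^u)$ with the endpoints of $\NP(q_i) + e_3$, yield 3D contributions without direct 2D analogs, and a careful case analysis is needed to show that the net loss of $m$ mixed volume arises precisely from the length-$m$ base edge of $Q$—equivalently, from replacing each bottom edge $[0, d_i e_1]$ of $\NP(f_i)$ by the shorter segment $\NP(q_i)$ of lattice length $d_i - m$ in the lifted polytopes, with the triangle $Q$ contributing back only $d_i - m$ of the original $d_i$ along the relevant interaction.
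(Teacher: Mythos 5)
Your construction of the lift coincides with the paper's ($\tilde f_i = yq_i + (f_i - f_i^u)$, $g = y - \gcd(f_1^u,f_2^u)$), and the verification that it is a lifting in the sense of Definition~\ref{d:lift} is fine. But the heart of the theorem, the inequality $\MV(\NP(\tilde f_1),\NP(\tilde f_2),\NP(g)) \leq \MV(\NP(f_1),\NP(f_2)) - m$, is never established: your plan is to expand $\MV(\tilde P_1,\tilde P_2,Q)$ via \eqref{eq:MV_support_functions}, expand $\MV(\NP(f_1),\NP(f_2))$ by a two-dimensional analogue, and match the two sums face by face, and you explicitly defer the "precise bookkeeping of this deficit" and the case analysis of the slanted facets. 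That bookkeeping \emph{is} the proof, so as it stands there is a genuine gap. Note also that your target ("falls short \ldots by exactly $m$") is stronger than what the theorem asserts and than what your own setup can deliver in degenerate situations; only a deficit of at least $m$ is needed.

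The paper avoids this term-by-term comparison altogether, and you should be aware of the mechanism because it is what makes the estimate tractable. One first compares the lifted system not with $\NP(f_1),\NP(f_2)$ directly but with the enlarged polytopes $P_i = \conv\bigl(\{0\}\cup(e_3+[0,(\deg f_i^u-m)e_1])\cup \NP(f_i-f_i^u)\bigr)$ and $\Delta=\conv(\{e_3\}\cup[0,me_1])$: generic systems supported there can be resubstituted (solve $h=0$ for $y$) into generic systems supported on $\NP(f_1),\NP(f_2)$, so Proposition~\ref{prop:trivial_lift} gives $\MV(P_1,P_2,\Delta)=\MV(\NP(f_1),\NP(f_2))$ with no polytope computation at all. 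It then remains to show $\MV(P_1,P_2,\Delta)-\MV(\NP(\tilde f_1),\NP(\tilde f_2),\Delta)\geq m$, which follows from weak monotonicity plus a \emph{single} term of \eqref{eq:MV_support_functions} applied to the pair $P_1\supseteq\NP(\tilde f_1)$: since the remainder vanishes, $\tilde f_1$ has no constant term while $0\in P_1$, so in the direction $w=-e_2-ke_3$ (with $k$ minimal such that $\NP(\tilde f_2)$ has a lattice point at height $k$ in $x_2$) the height difference is positive, and $\Delta^w=[0,me_1]$ supplies the factor $m$. If you want to complete your version without this detour, you would have to carry out the full facet-by-facet comparison between a three-dimensional and a two-dimensional mixed-volume expansion, which is exactly the step your proposal leaves open.
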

\begin{proof}
  We adapt the proof of Theorem~\ref{thm:main}.  Here
  $\phi_{(f_1,f_2)}$ maps $g_{1},g_{2}$ to
  \begin{align*}
    \tilde{g}_1(x_1,x_2,y) &= y q_1 + r_1 + (g_1 - g_1^u), \\
    \tilde{g}_2(x_1,x_2,y) &= y q_2 + r_2 + (g_2 - g_2^u), \\
    h(x_1,x_2,y) &= y - \gcd(f_1^u,f_2^u),
  \end{align*}
  where now $q_j$ and $r_j$ are quotient and remainder of the division
  of $f_j^u$ by $\gcd(f_1^u,f_2^u)$.  Again, $\phi_{(f_1,f_2)}$ lifts
  $(g_1, g_2)$ in the sense of Definition~\ref{d:lift}.  The system
  $\phi_{(f_{1},f_{2})}(g_{1},g_{2})$ lies in $\C[P_1,P_2,\Delta]$,
  where
  \begin{align*}
    P_1 &\coloneqq \conv \Big( \{0\} \cup (e_3 +[0,(\deg(f_1^u)-m)e_1]) \cup \NP(f_1-f_1^u) \Big), \\
    P_2 &\coloneqq \conv \Big( \{0\} \cup (e_3 +[0,(\deg(f_2^u)-m)e_1]) \cup \NP(f_2-f_2^u) \Big), \\
    \Delta &\coloneqq \conv \Big( \{e_3\} \cup [0, m e_1] \Big).
  \end{align*}
  Let $U \subset \C[P_1,P_2,P_3]$ be the Zariski-open subset for which
  the coefficient of $y$ in $h$ is non-zero.  The generic
  resubstitution
  $\tilde{\phi}_{(f_1,f_2)} \colon U \rightarrow
  \C[\NP(f_1),\NP(f_2)]$ which solves $h=0$ for $y$ and plugs the
  result into $\tilde{g}_1$ and $\tilde{g}_2$ is well defined.
  Therefore, Proposition~\ref{prop:trivial_lift} yields the equality
  $\MV(P_1,P_2,\Delta) = \MV(\NP(f_1),\NP(f_2))$.  We now show that
  the mixed volume decreases by~$m$ upon lifting. The remainder $r_i$
  in $\tilde{f}_i$ vanishes for $i=1,2$.  Without loss of generality
  one can assume that $x_2$ occurs in~$f_2$ as it occurs in $f_{1}$ or
  $f_{2}$ or we had a univariate system to start with.  Let $k \geq 1$
  be minimal such that $\NP(\tilde{f}_2)$ contains a lattice point
  with $x_2$-coordinate~$k$. Let $w \coloneqq - e_2 - k e_3 \in \R^3$.
  Using \eqref{eq:MV_support_functions}, we have the following chain
  of inequalities:
  \begin{align*}
    \MV(P_1,P_2,\Delta) - \MV(\NP(\tilde{f}_1), \NP(\tilde{f}_2), \Delta) &\geq \MV(P_1, \NP(\tilde{f}_2), \Delta) - \MV(\NP(\tilde{f}_1), \NP(\tilde{f}_2), \Delta) \\ &= \sum_{\substack{v \in \Z^3 \setminus \{0\}\\ \text{primitive}}} \left( \Ht_{P_1}(v) - \Ht_{\NP(\tilde{f}_1)}(v) \right) \MV_{v^\perp}(\NP(\tilde{f}_2)^v, \Delta^v) \\ &\geq \left(\Ht_{P_1}(w) - \Ht_{\NP(\tilde{f}_1)}(w)\right) \MV_{w^\perp}(\NP(\tilde{f}_2)^w, [0, m e_1]) \\ &\geq m \cdot \MV(\pi(\NP(\tilde{f}_2)^w)) = m.
	\end{align*}
	Here, $\pi\colon w^\perp \rightarrow \R$ is the projection along the $x_1$-axis, i.e., forgetting the $e_1$-coordinate observing that $e_1 \in w^\perp$. Moreover, we use weak monotonicity of the mixed volume in the first step, the fact that $\Ht_P(v) \geq \Ht_Q(v)$ for $P \supseteq Q$ in the third step, and $\MV(\pi(N(\tilde{f}_2)^w)) = 1$ by definition of $w$ in the last step.
\end{proof}

\begin{rem}
  In order to determine a lifting as in
  Theorem~\ref{thm:bivariate_lifting}, it suffices to compute the
  $\gcd$ of two univariate polynomials via the Euclidean algorithm. In
  particular, it is not necessary to compute the common roots of the
  facial subsystem as in Theorem~\ref{thm:main}.
\end{rem}

\bibliographystyle{alpha}
\bibliography{decreasing_mixed_volume}
\end{document}